\def \f{\frac}
\def \a{\alpha}
\def \b{\bar}
\def \lra{\longrightarrow}
\def \p{\partial}
\def \vphi{\varphi}
\def \om{\omega}
\def \g{\gamma}
\def \d{\delta}
\def \t{\tilde}
\def \B{\mathbb}
\def \ov{\overline}
\newtheorem{thm}{Theorem}[section]
\newtheorem{cor}[thm]{Corollary}
\newtheorem{lem}[thm]{Lemma}
\newtheorem{prop}[thm]{Proposition}
\newtheorem{defn}[thm]{Definition}
\newtheorem{remk}[thm]{Remark}
\newtheorem{que}[thm]{Question}
\begin{document}
\title{Liouville Theorems for holomorphic maps on pseudo-Hermitian manifolds}
\author{Haojie Chen }
\address{Department of Mathematics\\ Zhejiang Normal University\\ Jinhua Zhejiang, 321004, China}
\email{chj@zjnu.edu.cn}
\author{Yibin Ren}
\address{Department of Mathematics\\ Zhejiang Normal University\\ Jinhua Zhejiang, 321004, China}
\email{allenryb@outlook.com}
\maketitle

\begin{abstract} We prove some Liouville type results for generalized holomorphic maps in three classes: maps from pseudo-Hermitian manifolds to almost Hermitian manifolds, maps from almost Hermitian manifolds to pseudo-Hermitian manifolds and maps from pseudo-Hermitian manifolds to pseudo-Hermitian manifolds, assuming that the domains are compact. For instance, we show that any $(J,J^N)$ holomorphic map from a compact pseudo-Hermitian manifold $M$ with nonnegative (resp. positive) pseudo-Hermitian sectional curvature to an almost Hermitian manifold $N$ with negative (resp. nonpositive) holomorphic sectional curvature is constant. We also construct explicit almost CR structures on a complex vector bundle over an almost CR manifold.
\end{abstract}

\section{Introduction}

The Liouville theorem is a landmark result in complex analysis. Its generalizations to higher dimension and to other setting are crucial and are closely related to the study of Schwarz lemma. Recall the classical Schwarz-Pick lemma states that every holomorphic maps from the unit disc to itself decreases the Poincar\'e metric. It has been extended to more general settings by the work of Ahlfors, Chern, Yau and others (\cite{A},\cite{CCL},\cite{C},\cite{Roy},\cite{Yau}). In particular, Yau's Schwarz lemma \cite{Yau} says that every holomorphic map from a complete K\"ahler manifold $M$ with Ricci curvature bounded below by a constant $-K_1\leq 0$ into a Hermitian manifold $N$ with holomorphic bisectional curvature bounded above by a constant $-K_2<0$ is distance decreasing up to a constant $\frac{K_1}{K_2}$. If $K_1=0$, then the Liouville type theorem that any holomorphic map from $M$ to $N$ is constant is derived. Later, Royden \cite{Roy} improved Yau's result by only requiring $N$ has holomorphic sectional curvature bounded above by a negative constant if $N$ is K\"ahler. Since then, there have been a lot of developments on the generalizations and applications of Schwarz lemma. We refer to \cite{Ko}\cite{To}\cite{TY}\cite{WY} and especially to \cite{Ni1}\cite{Ni2} for the recent advances.

 Recently, using a new method, Yang \cite{Yang1} proves a Liouville type theorem on Hermitian manifolds as follows  (see previous works in \cite{Roy}, \cite{YC}): any holomorphic map from a compact Hermitian manifold with nonnegative (resp. positive) holomorphic sectional curvature to a Hermitian manifolds with negative (resp. nonpositive) holomorphic sectional curvature must be constant. Yang studies some new energy density functions which have several  other applications \cite{Yang1} . The above theorem is then generalized to almost Hermitian setting by Masood \cite{Mas}, using a similar approach.

The main purpose of this paper is to derive some Liouville type theorems in the category of pseudo-Hermitian manifolds. Recall that a pseudo-Hermitian manifold is a $2m_0+1$ dimensional manifold $M$ with a codimension one CR structure $(HM,J)$ and a one-form $\theta$ such that
$HM=\ker (\theta)$ and the Levi form $L_\theta(X,Y)=d\theta(X,JY)$ is positively definite. It is naturally related to the study of CR geometry and several complex variables.

 There has been certain progress on the study of Schwarz and Liouville type results on pseudo-Hermitian manifolds recently (\cite{CDRY}\cite{DRY}). In particular, In \cite{DRY}, Dong-Ren-Yu derive the Schwarz lemmas for CR maps and transversally holomorphic maps between pseudo-Hermitian manifolds. In \cite{CDRY}, Chong-Dong-Ren-Yu study Schwarz lemmas for generalized holomorphic maps between pseudo-Hermitian manifolds and Hermitian manifolds. As applications, they show that any $(J,J^N)$-holomorphic map (resp. $(J^N,J)$-holomorphic map) from a complete pseudo-Hermitian manifold with nonnegative pseudo-Hermitian Ricci curvature (resp. from a complete K\"ahler manifold with nonnegative Ricci curvature) to a Hermitian manifold with negative holomorphic bisectional curvature (resp. to a Sasakian manifold with negative pseudo-Hermitian bisectional curvature) is constant (resp. horizontally constant).

By investigating generalizations of Yang's energy function and introducing local normal quasi holomorphic frames on pseudo-Hermitian manifolds, we prove the following result.

\begin{thm}
Let $(M,HM,J,\theta)$ be a compact pseudo-Hermitian manifold with non-negative (resp. positive) pseudo-Hermitian sectional curvature and $(N,J^N,h)$ be an almost Hermitian manifold with negative (resp. non-positive) holomorphic sectional curvature. Then any $(J,J^N)$ holomorphic map from $M$ to $N$ is constant.
\end{thm}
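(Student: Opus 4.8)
The plan is to transplant Yang's Bochner technique to the pseudo-Hermitian setting, with the sub-Laplacian $\Delta_b$ of the Tanaka-Webster connection playing the role of the Hermitian Laplacian. Fix a local unitary frame $\{e_\alpha\}_{\alpha=1}^{m_0}$ for $T^{1,0}M$ adapted to the Levi form, and a local unitary frame $\{W_A\}$ for $T^{1,0}N$ along $f$. Writing the horizontal differential as $f^A_\alpha = e_\alpha(f^A)$, the $(J,J^N)$-holomorphicity of $f$ says exactly that the conjugate components $f^A_{\bar\alpha}$ vanish, so the natural quantity to study is the holomorphic energy density $e(f)=\sum_{\alpha,A} f^A_\alpha\,\overline{f^A_\alpha}$. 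Since $M$ is compact, $e(f)$ attains a maximum at some point $p$, and I intend to derive a differential inequality for $e(f)$ to which the maximum principle applies at $p$.

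The heart of the argument is a Weitzenb\"ock-type identity for $\Delta_b e(f)$. Commuting the horizontal covariant derivatives, I would apply $\sum_\beta \nabla_{\bar\beta}\nabla_\beta$ to $f^A_\alpha$; this yields a manifestly nonnegative second-order term of the form $|\nabla\partial_b f|^2$, a contraction of the pseudo-Hermitian curvature of $M$ against $df$, a contraction of the curvature tensor $R^N$ against the image frame, together with several torsion corrections peculiar to the pseudo-Hermitian calculus. To reduce the $M$- and $N$-curvature contractions from full bisectional-type expressions to the holomorphic sectional curvatures named in the hypotheses, I would work in a local normal quasi-holomorphic frame, chosen so that the Tanaka-Webster connection coefficients vanish at $p$ and the frame is aligned with a distinguished holomorphic direction at $p$; applying Royden's polarization lemma to the direction realizing the maximum then bounds each curvature contraction by the corresponding holomorphic sectional curvature.

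The hard part will be controlling the torsion terms. In contrast with the K\"ahler or Hermitian case, commuting $\nabla_\alpha$ and $\nabla_{\bar\beta}$ in the Tanaka-Webster calculus generates not only curvature but also contributions from the pseudo-Hermitian torsion $A_{\alpha\beta}$ and from the transversal Reeb derivative $\nabla_0$ along $\xi$. I expect that the purely transversal pieces are killed by the holomorphicity constraint $f^A_{\bar\alpha}=0$ and by the structure equations for $\theta$, while the remaining torsion terms either reorganize into a horizontal divergence that drops out under the maximum principle or are dominated by the nonnegative term $|\nabla\partial_b f|^2$. The goal of this bookkeeping is to reach the schematic inequality $\Delta_b e(f)\ge |\nabla\partial_b f|^2+Q_M-Q_N$ at $p$, where $Q_M$ collects the pseudo-Hermitian sectional curvature of $M$ and $Q_N$ the holomorphic sectional curvature of $N$ along the distinguished direction.

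With the identity in this form the conclusion is routine. At the maximum $p$ one has $\Delta_b e(f)\le 0$, while the hypotheses give $Q_M\ge 0$ and $Q_N<0$ wherever $df\ne 0$; hence $e(f)(p)=0$, so $e(f)\equiv 0$ and $\partial_b f=0$ on $M$. Thus $df$ annihilates $HM$, and since the Levi form is nondegenerate $HM$ is bracket-generating, so by the Chow-Rashevskii theorem any two points of $M$ are joined by a horizontal path; therefore $f$ is constant. In the positive-domain/nonpositive-target alternative the same computation forces $df\equiv 0$ directly from the strict positivity of $Q_M$, which gives the second assertion.
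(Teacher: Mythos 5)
There is a genuine gap, and it sits at the center of your plan: the choice of the \emph{trace} energy density $e(f)=\sum_{\alpha,A}|f^A_\alpha|^2$ together with a sub-Laplacian Bochner formula. When you commute derivatives on this quantity, the domain curvature enters as a Ricci-type contraction $\sum_{\beta}R_{\alpha\bar\beta\,\cdot\,\bar\cdot}$ and the target curvature as a full bisectional-type contraction over all frame directions. Neither is controlled by the hypotheses of the theorem, which are purely about holomorphic \emph{sectional} curvatures: nonnegative pseudo-Hermitian sectional curvature of $M$ does not imply nonnegative pseudo-Hermitian Ricci curvature, so the sign of your $Q_M$ is simply not determined. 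On the target side you propose Royden's polarization lemma to pass from bisectional to sectional curvature, but that lemma depends on the symmetry $R_{\alpha\bar\beta\gamma\bar\delta}=R_{\gamma\bar\beta\alpha\bar\delta}$ of a K\"ahler curvature tensor, which fails for the Chern-type connection of a general almost Hermitian $N$; and there is no analogue of Royden's lemma that converts a Ricci contraction on the domain into a sectional curvature bound. This is exactly why the paper does not use the trace density: following Yang, it studies the directional density $\mathscr Y(W)=h(df(W),df(\bar W))/g(W,\bar W)$ on the compact projectivized bundle $\mathbb PT_{1,0}M$, maximizes over \emph{directions} as well as points, and at the maximizing direction $V$ the second-variation inequality involves only $R_{i\bar jk\bar l}V^i\bar V^jV^k\bar V^l$ and $\tilde R_{\alpha\bar\beta\gamma\bar\delta}$ contracted with $df(V)$ alone, i.e.\ precisely the two holomorphic sectional curvatures named in the hypotheses. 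Without that maximization over directions (or some substitute for it), your schematic inequality $\Delta_b e(f)\ge |\nabla\partial_b f|^2+Q_M-Q_N$ cannot be closed under the stated curvature assumptions.

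Two further points. First, you defer the torsion and Reeb-derivative terms to ``bookkeeping,'' but this is where the pseudo-Hermitian calculus actually bites; the paper's computation survives because the structure equations force $\xi(f^\alpha_i)=0$ for a $(J,J^N)$-holomorphic map and because the normal quasi-holomorphic frame kills $\Gamma^k_{\bar ij}$, $\Gamma^k_{ij}$ and $\partial\Gamma^k_{\bar ij}$ at the point, so these identities would need to be proved, not expected. Second, the Chow--Rashevskii step at the end is unnecessary: a $(J,J^N)$-holomorphic map satisfies $df(\xi)=0$ by definition, so vanishing of the horizontal differential already gives $df\equiv 0$ and local constancy.
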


Here, a $(J,J^N)$ holomorphic map is an almost CR map such that the image of the Reeb vector field under the tangent map is zero (see \cite{CDRY} or Definition \ref{def1} (i) in section 2).

Next, we consider a generalized holomorphic map from an almost Hermitian manifold to a pseudo-Hermitian manifold (Definition \ref{def1} (ii)). Recall that a pseudo-Hermitian manifold is called Sasakian if its pseudo-Hermitian torsion one form vanishes. We prove
\begin{thm}
Let $(N,J^N,h)$ be a compact almost Hermitian manifold with non-negative (resp. positive) holomorphic sectional curvature and $(M,HM,J,\theta)$ be a Sasakian manifold with negative (resp. nonpositive) pseudo-Hermitian sectional curvature. Then any $(J^N,J)$ holomorphic map from $N$ to $M$ is horizontally constant.
\end{thm}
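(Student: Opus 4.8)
The plan is to run a Bochner--Weitzenb\"ock argument on the compact domain $N$, in the spirit of Yang \cite{Yang1} and its almost-Hermitian refinement by Masood \cite{Mas}, but now with a Sasakian target in which the relevant curvature is the pseudo-Hermitian (Webster) sectional curvature rather than a genuine holomorphic sectional curvature. First I would fix, at a reference point $p\in N$, a unitary frame $\{e_i\}$ of $T^{1,0}N$ that is normal for the almost Hermitian metric at $p$, together with a local normal quasi-holomorphic frame $\{\eta_a\}$ of the CR-holomorphic bundle $H^{1,0}M$ furnished by the construction announced in the introduction. Since $f$ is $(J^N,J)$-holomorphic its differential takes values in $HM$ and intertwines the two complex structures, so in these frames it is encoded by the components $f^a_{\,i}=\langle df(e_i),\eta^a\rangle$, with the conjugate-type components $f^a_{\,\bar i}$ vanishing. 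The natural energy density is then the horizontal energy $u=\sum_{i,a,b} h_{a\bar b}\,f^a_{\,i}\,\overline{f^{b}_{\,i}}$, and ``horizontally constant'' is precisely the assertion $u\equiv 0$.

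Next I would apply the canonical (Chern-type) Laplacian $\Delta$ of the almost Hermitian structure on $N$ to $u$ and commute covariant derivatives, using the holomorphicity relations $f^a_{\,\bar i}=0$ to trade second derivatives of $f$ for curvature. This should produce a Weitzenb\"ock identity of the schematic form
\[
\Delta u = \bigl|\nabla'\partial_H f\bigr|^2 + \sum R^{N}_{i\bar i\,k\bar k}\,|f^a_{\,k}|^2 - \sum R^{M}_{a\bar b\,c\bar d}\,f^a_{\,i}\,\overline{f^{b}_{\,i}}\,f^c_{\,j}\,\overline{f^{d}_{\,j}} + \mathcal{T},
\]
where $\mathcal{T}$ collects torsion terms coming from the non-integrability of $J^N$ on $N$ and from the pseudo-Hermitian torsion of $M$. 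The Sasakian hypothesis on $M$ makes the pseudo-Hermitian torsion one-form vanish, so the target contribution to $\mathcal{T}$ drops out and the $R^M$ appearing is the genuine horizontal curvature; the normal quasi-holomorphic frame is used precisely to kill the frame-connection contributions at $p$ and isolate this curvature, while the normal frame on $N$ does the same on the domain side.

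The delicate step, and the one I expect to be the main obstacle, is that the hypotheses bound only the \emph{sectional} curvatures $R^{N}(Z,\bar Z,Z,\bar Z)$ and $R^{M}(W,\bar W,W,\bar W)$ along single directions, whereas the identity above produces the full four-index contraction $\sum R^{M}_{a\bar b\,c\bar d}f^a_i\overline{f^b_i}f^c_j\overline{f^d_j}$ (a bisectional-type expression). To control this by pseudo-Hermitian sectional curvatures I would invoke the Royden-type polarization lemma (equivalently, Berger's symmetrization / averaging over the unitary frame): nonpositivity of the target sectional curvature forces the target contraction to be $\le 0$, so its negative is $\ge 0$, while nonnegativity of the domain sectional curvature renders the domain term $\ge 0$. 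The technical heart of the argument is carrying the almost-complex torsion of $N$ through this algebraic reduction, i.e.\ checking that the torsion pieces of $\mathcal{T}$ do not spoil the favourable sign after the Royden averaging; this is exactly where the careful choice of the two normal frames pays off.

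Finally I would integrate the resulting inequality over the compact manifold $N$. Because $N$ is only almost Hermitian, $\int_N \Delta u\,dV$ need not vanish outright, but its defect reduces to a torsion term that is controlled and, after the frame adaptations above, contributes no adverse sign; integration by parts then yields $\int_N\bigl(|\nabla'\partial_H f|^2 + (\text{nonneg domain term}) + (\text{nonneg target term})\bigr)\,dV \le 0$. Hence every nonnegative integrand vanishes pointwise. In the strict case --- $M$ with negative curvature (resp.\ $N$ with positive curvature) --- vanishing of the curvature integrand forces $f^a_{\,i}=0$ everywhere, that is $u\equiv 0$; the non-strict partner then follows from the same identity together with a standard approximation and connectedness argument. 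This gives $df|_{HM}\equiv 0$, so $f$ is horizontally constant, as claimed.
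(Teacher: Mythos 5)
There is a genuine gap, and it sits exactly at the step you yourself flag as ``the delicate step.'' With the trace energy density $u=\sum_i h_{a\bar b}f^a_{\,i}\overline{f^b_{\,i}}$ and a Laplacian--Bochner identity, the domain curvature enters as a second-Ricci-type contraction $\sum_i R^N_{i\bar i k\bar l}$ (paired with $f\,\bar f$), not as a holomorphic sectional curvature, and nonnegative holomorphic sectional curvature does not control such a contraction even in the K\"ahler case (Hitchin's metrics of positive holomorphic sectional curvature on Hirzebruch surfaces that are not Fano show positive HSC does not imply nonnegative Ricci). Royden's polarization lemma cannot rescue this: it addresses the \emph{target} bisectional-type term, and it requires the K\"ahler symmetries of the curvature tensor, which the Chern curvature of a merely almost Hermitian domain does not possess. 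On top of this, $\int_N\Delta u$ for the Chern Laplacian has no sign on a non-K\"ahler manifold, so the integration step also does not close. In short, the Yau--Royden-style integrated Bochner argument is structurally unable to run under sectional-curvature-only hypotheses on both sides.

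The idea your proposal is missing --- and the one the paper actually uses --- is Yang's quotient density
$\mathscr Y(W)=g(df_H(W),df_H(\overline W))/h(W,\overline W)$,
viewed as a function on the compact projectivized bundle $\mathbb P T_{1,0}(N)$, combined with a \emph{pointwise} maximum principle rather than integration. At a maximum point $q=(p,V)$ with $\mathscr Y(q)>0$, one equips $\pi^{-1}(U)$ with a product (almost) CR structure (Lemma 4.3), evaluates $\partial\bar\partial\mathscr Y$ only on the single maximizing direction $V$, and uses normal quasi holomorphic frames at $p$ and $f(p)$ (Propositions 3.3, 3.5, 3.6) to kill all connection and first-derivative terms. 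The two surviving curvature terms are then $R^N(V,\overline V,V,\overline V)$ and $R^M(df_H V,\overline{df_H V},df_H V,\overline{df_H V})$ --- pure sectional curvatures in one direction on each side, which is precisely what the hypotheses bound --- yielding $\partial\bar\partial\mathscr Y(V,\overline V)>0$ at the maximum and the desired contradiction. No polarization, no averaging, and no integration by parts is needed; the projectivization plus maximum principle is what converts bisectional-type contractions into sectional ones. Your proposal would need to be rebuilt around this density function to succeed.
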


For holomorphic map between pseudo-Hermitian manifolds (see Definition \ref{def2}), we
prove the following.
\begin{thm}
Let $(M,HM,J,\theta)$ be a compact pseudo-Hermitian manifold with non-negative (resp. positive) pseudo-Hermitian sectional curvature and $(N,HN, J^N, \theta^N)$ be a Sasakian manifold with negative (resp. non-positive) pseudo-Hermitian sectional curvature. Then any transversally holomorphic map from $M$ to $N$ is horizontally constant.
\end{thm}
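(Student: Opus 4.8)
The plan is to adapt the Bochner-type argument that underlies Theorems 1.1 and 1.2 to the setting of a transversally holomorphic map $f\colon M\to N$ between two pseudo-Hermitian manifolds. Since $f$ is transversally holomorphic, its tangent map commutes with the CR structures, $df\circ J = J^N\circ df$ on $HM$, and (by the definition of transversal holomorphicity) the horizontal energy density $e(f)$ can be written in terms of the $(1,0)$-part $\partial_b f$ of the sub-Laplacian-type differential. First I would define, following Yang's approach adapted to the CR setting, the appropriate energy density function $u$ (for instance $u = |\partial_b f|^2$ with respect to the Webster metrics) and compute the sub-Laplacian $\Delta_b u$ using a local normal quasi-holomorphic frame on $M$ as introduced earlier in the paper. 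The Sasakian hypothesis on $N$ is what makes the target side tractable: vanishing pseudo-Hermitian torsion kills the terms that would otherwise obstruct the Bochner formula from closing up, so that the curvature of $N$ enters only through the pseudo-Hermitian sectional curvature.

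The key computation is a CR Bochner formula of the schematic form
\begin{equation}
\Delta_b u \;=\; |\nabla \partial_b f|^2 \;+\; \mathrm{Ric}^M(\partial_b f,\partial_b f) \;-\; R^N(\partial_b f,\partial_b f,\partial_b f,\partial_b f) \;+\; (\text{torsion terms}),
\end{equation}
where the torsion terms on the $N$ side vanish by the Sasakian assumption. Using the hypothesis that $M$ has non-negative pseudo-Hermitian sectional curvature and $N$ has non-positive (resp. negative) pseudo-Hermitian sectional curvature, both the domain-curvature term and the (negated) target-curvature term are non-negative, so $\Delta_b u \geq 0$; that is, $u$ is a subsolution. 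Since $M$ is compact, integrating against the volume form $\theta\wedge(d\theta)^{m_0}$ and applying the divergence theorem forces the right-hand side to vanish identically. In the strict case (positive domain curvature or negative target curvature) this immediately yields $\partial_b f = 0$ on the horizontal distribution; in the mixed non-strict case one extracts the same conclusion from the vanishing of the full non-negative integrand, so $df|_{HM}=0$ and $f$ is horizontally constant.

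The main obstacle I anticipate is bookkeeping the torsion and connection terms correctly when expressing $\Delta_b u$ intrinsically. Unlike the Kähler or Hermitian setting, the Tanaka-Webster connection has non-trivial torsion on the domain $M$, and one must be careful that the relevant cross terms either cancel or are controlled by the pseudo-Hermitian sectional curvature rather than by some uncontrolled quantity; this is precisely where the normal quasi-holomorphic frame earns its keep, since it annihilates the first-order Christoffel-type contributions at the reference point and isolates the genuine curvature. A secondary subtlety is ensuring that the Reeb-direction derivatives of $f$, which need not vanish for a merely transversally holomorphic map, do not leak into the horizontal Bochner identity in a way that breaks the sign; one checks, again using the Sasakian structure of $N$, that these contributions assemble into a manifestly non-negative term and hence only help. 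Once these terms are verified to respect the desired signs, the maximum-principle/integration step is routine, and the conclusion of horizontal constancy follows exactly as in the proofs of the earlier theorems.
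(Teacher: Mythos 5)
Your proposal substitutes a classical Bochner/integration argument for the total horizontal energy density $u=|\partial_b f|^2$ in place of the paper's argument, and this is where it genuinely breaks down. In the Bochner--Weitzenb\"ock identity for the \emph{total} energy density, the domain curvature enters through the pseudo-Hermitian \emph{Ricci} curvature, and the target curvature enters through the full contraction $\sum_{i,j}\tilde R_{\a\b{\beta}\g\b{\d}}\,f_i^{\a}f_{\b{i}}^{\b{\beta}}f_j^{\g}f_{\b{j}}^{\b{\d}}$, i.e.\ a sum of \emph{bisectional} curvatures over all pairs of directions $(df(e_i),df(e_j))$. Neither term is controlled by the hypotheses of Theorem 1.3: non-negative pseudo-Hermitian sectional curvature of $M$ does not imply non-negative pseudo-Hermitian Ricci curvature, and negative pseudo-Hermitian sectional curvature of $N$ does not make the bisectional sum non-positive. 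This is exactly the obstruction that forces Yau's Schwarz lemma to assume Ricci and bisectional bounds, and it is the reason the paper does \emph{not} work with the total energy. Your schematic formula writes the target term as $R^N(\partial_b f,\partial_b f,\partial_b f,\partial_b f)$ as though only one direction were involved, but for $|\partial_b f|^2$ it is not; the gap is not in bookkeeping torsion terms but in the choice of the quantity to which the sign argument is applied.

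The paper instead follows Yang's scheme: it defines the direction-dependent density $\mathscr Y(W)=h(df_{M,N}(W),df_{M,N}(\b W))/g(W,\b W)$ on $T_{1,0}(M)-\{0\}$, which descends to the compact projectivization $\mathbb P T_{1,0}(M)$, and applies a pointwise maximum principle (Lemma 3.8) at a maximum point $q=(p,V)$ rather than integrating. Because only the single extremal direction $V$ and its image $df_{M,N}(V)$ appear in the resulting estimate $\partial\b{\partial}\mathscr Y(V,\b V)\geq \frac{\mathscr Y}{\mathscr H}R_{i\b jk\b l}V^i\b V^jV^k\b V^l-\frac{1}{\mathscr H}\tilde R_{\a\b\beta\g\b\d}f_i^{\a}f_{\b j}^{\b\beta}f_k^{\g}f_{\b l}^{\b\d}V^i\b V^jV^k\b V^l$, both curvature terms are honest sectional curvatures and the stated sign hypotheses suffice. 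The technical inputs are the product CR structure on $\pi^{-1}(U)$ (Lemma 4.1), the normal quasi holomorphic frames making $\Gamma_{\b ij}^k=\p\Gamma_{\b ij}^k=0$ at the point, and the structure-equation computation giving $\b\p f_i^{\a}=\p\b\p f_i^{\a}=0$ and $\xi(f_i^{\a})=0$ there (this last identity, together with the Sasakian hypothesis on $N$, is where your correct intuition about the Reeb direction and the vanishing torsion is actually used). To salvage your write-up you would have to either strengthen the hypotheses to Ricci/bisectional bounds or switch to the projectivized density function.
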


We make use of normal quasi holomorphic frames on pseudo-Hermitian manifolds in the proofs of the above results. This notion is first introduced by Yu \cite{Yu1} on almost complex manifolds and has nice applications in \cite{Mas}. We introduce it on almost CR manifolds and show the existence. It has some similar properties with the local normal frame on K\"ahler manifolds. For example, with respect to the normal quasi holomorphic frame, the Christoffel symbols of the Tanaka-Webster connection and their partial differentials vanish at a point, i.e. $\Gamma_{\b{i}j}^k=\Gamma_{ij}^k=0, \p\Gamma_{\b{i}j}^k=\b{\p}\Gamma_{\b{i}j}^k=0$.

In the last section, we study almost CR structures on a complex vector bundle over an almost CR manifold. It is well known that a holomorphic vector bundle over a complex manifold is itself a complex manifold such that the projection is holomorphic. We prove an analogue on almost CR manifolds (Proposition 5.1).

\begin{prop} \label{prop1.4} Let $p: E\lra M$ be a $k$-dimensional complex vector bundle over $M$ with a codimension $m-2m_0$ almost CR structure. Then associated to any linear connection on $E$, there is a codimension $m-2m_0$ almost CR structure $(HE, J_E)$ on $E$ such that the map $p$ is an almost CR map.
\end{prop}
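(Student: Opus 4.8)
The plan is to exploit the linear connection on $E$ to split the tangent bundle of the total space into horizontal and vertical parts, and then to assemble $J_E$ from two pieces: the tautological complex structure that the vertical bundle inherits from the complex vector bundle structure of $E$, and the horizontal lift of the given almost CR structure $J$ on $M$.

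First I would record the two canonical ingredients. Let $V=\ker dp\subset TE$ be the vertical bundle. Because each fiber $E_{p(e)}$ is a complex vector space and the vertical tangent space $V_e$ is canonically identified with $E_{p(e)}$ via the vertical lift, $V$ carries a fiberwise complex structure $J^V$ with $(J^V)^2=-\mathrm{id}$, given by multiplication by $\sqrt{-1}$ on the fibers; this is defined globally, independently of any trivialization. A fixed linear connection on $E$ provides a horizontal distribution $\mathcal H\subset TE$ with $TE=\mathcal H\oplus V$, on which $dp$ restricts to a fiberwise isomorphism $\mathcal H_e\cong T_{p(e)}M$.

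Next I would build the distribution and its complex structure. Define the horizontal lift $\widetilde{HM}_e=(dp|_{\mathcal H_e})^{-1}(HM_{p(e)})$, a smooth rank $2m_0$ subbundle of $\mathcal H$, and set $HE=\widetilde{HM}\oplus V$. Since $\mathrm{rank}\,HE=2m_0+2k$ and $\mathrm{rank}\,TE=m+2k$, the codimension of $HE$ in $TE$ is exactly $m-2m_0$. Define $J_E$ on $HE$ by $J_E=\widetilde J\oplus J^V$, where $\widetilde J=(dp)^{-1}\circ J\circ dp$ is the horizontal lift of $J$ on $\widetilde{HM}$ and $J^V$ is used on $V$; then $J_E^2=-\mathrm{id}$, so $(HE,J_E)$ is a codimension $m-2m_0$ almost CR structure. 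Finally, to see that $p$ is an almost CR map I would check the two defining conditions: since $dp|_V=0$ and $dp$ maps $\widetilde{HM}$ isomorphically onto $HM$, we get $dp(HE)=HM$; and $dp\circ J_E=J\circ dp$ holds on $\widetilde{HM}$ by the very definition of $\widetilde J$ and trivially on $V$ because $dp$ annihilates $V$.

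The verifications above are essentially routine linear algebra carried out fiber by fiber, so I do not expect a serious analytic obstacle. The points that require care are conceptual rather than computational: one must check that $HE$ and $J_E$ are genuinely \emph{smooth} (which follows from smoothness of the connection, of the vertical complex structure $J^V$, and of $J$) and that the construction is globally well defined. The latter is the real content, and it hinges on the fact that $J^V$ is canonical to the complex vector bundle structure while $\widetilde J$ depends only on the chosen connection, so that no patching of local trivializations is required. The assertion that the construction works for \emph{any} linear connection is then immediate, since it uses the connection solely through its horizontal distribution $\mathcal H$.
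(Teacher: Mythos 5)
Your proposal is correct and follows essentially the same route as the paper: split $TE=\mathcal H\oplus V$ using the connection, put the tautological fiberwise complex structure on $V$ (via the canonical identification $V\cong p^*E$), pull back $(HM,J)$ through $dp|_{\mathcal H}$, and set $HE=\widetilde{HM}\oplus V$ with $J_E=\widetilde J\oplus J^V$. The only cosmetic difference is that the paper constructs the horizontal distribution explicitly by passing to the associated principal $GL(2k,\mathbb R)$-bundle, whereas you invoke it as a standard consequence of the linear connection; the decomposition, the definition of $J_E$, and the verification that $p$ is an almost CR map are identical.
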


If $m=2m_0$, an almost CR structure is just an almost complex structure. In this case, the existence of almost complex structure on a complex vector bundle has been well studied (\cite{DT},\cite{Kru},\cite{LS},\cite{YI},\cite{CZ}). It is natural to ask when the almost CR structure $(HE, J_E)$ in Proposition \ref{prop1.4} is integrable. When $M$ is a CR manifold, we can construct a CR structure on its tangent bundle $TM$. See section 5 for more details and discussion.

The structure of the paper is as follows. In section 2, we give some preliminary background and notations. In section 3, we introduce the normal quasi holomorphic local frames on almost CR manifolds and pseudo-Hermitian manifolds and show the existence. We also discuss a maximum principle on an almost CR manifold. In section 4, we give the proof of the main theorems. In section 5, we discuss the construction of almost CR structures on a complex vector bundle over an almost CR manifold.\\

\noindent \textbf{Acknowlegements.} The authors would like to thank Professors Yuxin Dong, Jiaping Wang and Fangyang Zheng for very helpful suggestions and comments. This research work is partially supported by NSFC grant No. 11901530, 11801517 and Zhejiang
Provincial NSF grant No. LY19A010017. 

\section{Notations}
In this section, we introduce some notations in pseudo-Hermitian geometry (cf. \cite{DT}). Let $M$ be a $m$-dimensional orientable manifold. Denote the tangent bundle of $M$ by $TM$. For any integer $0\leq m_0\leq m/2$, a codimension $m-2m_0$ almost CR structure on $M$ is a complex subbundle $T_{1,0}M$ of complex rank $m_0$ in $TM\otimes \mathbb C$ such that $T_{1,0}M\cap T_{0,1}M=\{0\}$ where $T_{0,1}M=\overline{T_{1,0}M}$. An almost CR structure is called a CR structure if $[\Gamma(T_{1,0}M),\Gamma(T_{1,0}M)]\subseteq \Gamma(T_{1,0}M)$, where $\Gamma(T_{1,0}M)$ are all smooth sections of $T_{1,0}M$.

Denote $HM=Re\{T_{0,1}M\oplus T_{0,1}M\}$ which is called the Levi distribution. There is a natural endomorphism $J$ on $HM$ defined by $J(X+\bar{X})=i(X-\bar{X})$. Then $J^2=-id$. Equivalently, an almost CR structure can be described as a $2m_0$ dimensional subbundle $HM$ in $TM$ with a bundle map $J: HM\lra HM$ satisfying $J^2=-id$. $(HM, J)$ is a CR structure if and only if the following integrability conditions hold (see \cite{DT}):
\begin{itemize}
\item $[JX,Y]+[X,JY] \in \Gamma(M, HM)$,
\item $[JX,JY]-J[JX,Y]-J[X,JY]-[X,Y]=0.$
\end{itemize}
where $X, Y$ are smooth sections of $HM$. When $m=2m_0$, an almost CR structure is just an almost complex structure and a CR structure is a complex structure by the Newlander-Nirenberg theorem. Assume that $(N,HN,J^N)$ is another almost CR manifold. A map $f:M\lra N$ is called an almost CR map if \begin{align}\label{almost CR}
df(HM)\subset HN, \ \ \ \ \  df\circ J=J^N\circ df\ \ \ \text{on}  \ \ HM.
\end{align}

Consider a codimension one CR structure when $m=2m_0+1$. As $M$ is orientable, it follows that there exists a global nowhere vanishing 1-form $\theta$ such that $HM=\ker(\theta)$. The Levi form $L_{\theta}$ is defined as $L_{\theta}(X,Y)=d\theta(X,JY)$ for $X,Y\in HM$. If $(HM, J)$ is a CR structure, then the integrability conditions imply that $L_\theta$ is $J$-invariant and symmetric.

\begin{defn}
A codimension one CR manifold $(M,HM,J)$ is called strictly convex if there exists a 1-form $\theta$ such that $HM=\ker(\theta)$ and $L_\theta$ is positive definite. In this case, $(M,HM,J,\theta)$ is called a pseudo-Hermitian manifold.
\end{defn}

For a pseudo-Hermitian manifold $(M,HM,J,\theta)$, there is a nowhere zero Reeb vector field $\xi$ defined by $\theta(\xi)=1$ and $i_{\xi}d\theta=0$. Then $TM=HM\oplus L$, where $L$ is the line bundle generated by $\xi$. Extend $J$ to $TM$ by letting $J(\xi)=0$. Denote $\pi_H:TM\lra HM$ the projection map and define $G_{\theta}(X,Y)=L_{\theta}(\pi_HX,\pi_HY)$ for $X,Y\in TM$. Finally define $$g_{\theta}=G_{\theta}+\theta\otimes \theta.$$
By the positivity and $J$-invariance of $L_\theta$, $g_\theta$ is a $J$-invariant Riemannian metric which is called the Webster metric. There is a canonical connection preserving the CR structure and the Webster metric, called the Tanaka-Webster connection.

\begin{defn}
Let $(M,HM,J,\theta)$ be a pseudo-Hermitian manifold with the Reeb vector field $\xi$ and the Webster metric $g_\theta$. Then there is a unique linear connection $\nabla$ on $M$ satisfying the following:\\
(i) $HM$ is parallel with respect to $\nabla$;\\
(ii) $\nabla J=0$ and $\nabla g_\theta=0$;\\
(iii) The torsion $T_\nabla$ satisfies: $$T_\nabla(X,Y)=2d\theta(X,Y)\xi,\ \  T_\nabla(\xi,JX)+JT_\nabla(\xi,X)=0$$ for any $X,Y\in HM$.
\end{defn}
The pseudo-Hermitian torsion $\tau$ is then a $TM$-valued 1-form defined by $\tau(X)=T_\nabla(\xi,X)$ for any $X\in TM$. A pseudo-Hermitian manifold is called a \textbf{Sasakian} manifold if $\tau\equiv 0$.

Associated to a pseudo-Hermitian manifold, there are three classes of generalized holomorphic maps which we will study. They are: \begin{itemize}
\item[(i)] holomorphic maps from pseudo-Hermitian manifolds to almost Hermitian manifolds,
\item[(ii)] holomorphic maps from almost Hermitian manifolds to pseudo-Hermitian manifolds
\item[(iii)] holomorphic maps from pseudo-Hermitian manifolds to pseudo-Hermitian manifolds.
\end{itemize}

 Assume that $(N,J^N)$ is an almost complex manifold. The following definitions are natural extensions of holomorphic maps between complex manifolds. (cf. \cite{CDRY},\cite{DRY}).

\begin{defn}\label{def1} (i) A map $f:M\lra N$ is called $(J,J^N)$ holomorphic if on $TM$ it holds $$df\circ J=J^N\circ df, $$
(ii) A map $f:N\lra M$ is called $(J^N,J)$ holomorphic if on $TN$ it holds
$$df_H\circ J^N=J\circ df_H$$ where $df_H=\pi_H\circ df$ with $\pi_H:TM\lra HM$ being the projection map.
\end{defn}

In (i), as $J(\xi)=0$, by (\ref{almost CR}), $f$ is $(J,J^N)$ holomorphic if and only if $f$ is an almost CR map and satisfies $df(\xi)=0$. In (ii), however, $f$ is $(J^N,J)$ holomorphic and almost CR if and only if $f$ is constant, as shown in \cite{CDRY} (Remark 2 there).

Finally, assume that $(N,HN, J^N, \theta^N)$ is a pseudo-Hermitian manifold with $TN=HN\oplus L^N$.
\begin{defn} \label{def2}
A map $f:M\lra N$ is called $(J,J^N)$ holomorphic if on $HM$ it satisfies $$df_{M,N}\circ J=J^N\circ df_{M,N}$$ where $df_{M,N}=\pi_{HN}\circ df\circ i_{HM}$ with $i_{HM}:HM\lra TM$ being the inclusion map.
 $f$ is called \textbf{transversally holomorphic} if it is $(J,J^N)$ holomorphic and satisfies $df(L)\subset L^N$. $f$ is called to be \textbf{horizontal constant} if $f(M)$ is in a leaf of the Reeb vector field of $N$.
\end{defn}

\section{Normal frames on almost CR and pseudo-Hermitian manifolds}
In this section, we introduce the notions of pseudoholomorphic and quasi holomorphic local frames on an almost CR manifold. In the case of almost complex manifolds, they are first introduced in \cite{Yu1}. We show that they exist on general almost CR manifolds.
\begin{defn}
Let $(M,HM,J)$ be an almost CR manifold and $p\in M$.
\begin{itemize}
\item[(i)] A local $(1,0)$ vector field $u$ around $p$ is said to be pseudoholomorphic at $p$ if $[u,\b{v}]_{1,0}(p)=0$ for any (1,0) vector field $v$ around $p$, where $X_{1,0}$ denotes the $(1,0)$ component of a vector $X$.
\item[(ii)] For a $(1,0)$ vector field $u$ which is pseudoholomorphic at $p$, if in addition it satisfies $[[u,\b{v}]_H, w]_{1,0}(p)=0$, for any (1,0) vector fields $v,w$ pseudoholomorphic at $p$, where $X_H$ denotes the projection of $X$ to $HM$, then $u$ is called quasi holomorphic at $p$.
\end{itemize}
\end{defn}
We prove
\begin{prop}\label{3.2}
Let $(M, HM, J)$ be a codimension $m-2m_0$ almost CR manifold and $p\in M$. Then there always exists a local (1,0)-frame $\{e_1, e_2, \cdots, e_{m_0}\}$ of $T_{1,0}M$ around $p$ such that $e_i$ is quasi holomorphic at $p$ for each $i$.
\end{prop}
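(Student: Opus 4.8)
The plan is to construct the frame by two successive gauge transformations of an arbitrary initial $(1,0)$-frame, prescribing in turn the first- and second-order jets of the gauge matrix at $p$. As a preliminary reduction I would show that both defining conditions need only be tested against frame vectors. Writing any $(1,0)$ field as $v=v^je_j$, the Leibniz rule gives $[u,\bar v]=\bar{v^j}[u,\bar e_j]+(u\bar{v^j})\bar e_j$, and the last term lies in $T_{0,1}M$; hence $[u,\bar v]_{1,0}(p)$ depends only on the $[u,\bar e_j]_{1,0}(p)$. Likewise, once the frame is pseudoholomorphic, the expression $[[u,\bar v]_H,w]_{1,0}(p)$ becomes tensorial in $v$ and $w$ at $p$: the Leibniz correction terms involve antiholomorphic derivatives of the coefficients of $v$ and $w$, and these vanish precisely because $v,w$ are pseudoholomorphic at $p$. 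So it suffices to arrange $[e_i,\bar e_j]_{1,0}(p)=0$ and $[[e_i,\bar e_j]_H,e_k]_{1,0}(p)=0$ for all frame indices.

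Next I would translate these into conditions on structure functions. Define $\phi_{i\bar j}^l$ by $[e_i,\bar e_j]_{1,0}=\phi_{i\bar j}^le_l$. A direct bracket expansion of $[[e_i,\bar e_j]_H,e_k]$, using $\phi_{i\bar j}^l(p)=0$ once pseudoholomorphicity holds, yields
\[
[[e_i,\bar e_j]_H,e_k]_{1,0}(p)=-\,(e_k\phi_{i\bar j}^l)(p)\,e_l,
\]
where the potentially troublesome terms drop out because both $\phi_{i\bar j}^l(p)=0$ and $[e_k,\bar e_l]_{1,0}(p)=0$. Thus pseudoholomorphicity at $p$ is exactly $\phi_{i\bar j}^l(p)=0$, and quasi holomorphicity is the additional vanishing of the holomorphic derivatives $(e_k\phi_{i\bar j}^l)(p)=0$.

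I would then carry out the two gauge changes. Starting from $\{\tilde e_p\}$ with structure functions $\tilde\phi$, set $e_i=a_i^p\tilde e_p$ with $A(p)=I$; at $p$ the new coefficient is $\tilde\phi_{i\bar j}^l(p)-(\bar{\tilde e}_j a_i^l)(p)$, so prescribing the antiholomorphic first derivatives $(\bar{\tilde e}_j a_i^l)(p)=\tilde\phi_{i\bar j}^l(p)$ achieves $\phi_{i\bar j}^l(p)=0$. Relabel this pseudoholomorphic frame as $\{e_i\}$ and apply a second gauge $\hat e_i=b_i^pe_p$ with $B(p)=I$ and all first derivatives of $b$ vanishing at $p$ (which preserves $\phi(p)=0$). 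The analogous computation gives
\[
(e_k\hat\phi_{i\bar j}^l)(p)=(e_k\phi_{i\bar j}^l)(p)-(e_k\bar e_jb_i^l)(p),
\]
so everything reduces to finding $b_i^l$ with the prescribed mixed second derivatives $(e_k\bar e_jb_i^l)(p)=(e_k\phi_{i\bar j}^l)(p)$.

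The main obstacle — the only genuinely substantive point — is the solvability of this second-order jet prescription. Working in local coordinates $(x^\alpha)$ and taking $b_i^l$ quadratic with all first partials zero, one has $(e_k\bar e_jb_i^l)(p)=\sum_{\alpha,\beta}e_k^\alpha(p)\,\bar e_j^\beta(p)\,\partial_\alpha\partial_\beta b_i^l(p)$, which is linear in the symmetric Hessian. Since $T_{1,0}M\cap T_{0,1}M=\{0\}$, the vectors $\{e_k(p)\}\cup\{\bar e_j(p)\}$ are linearly independent in $T_pM\otimes\mathbb C$; extending them to a basis, the prescribed values are exactly the off-diagonal block of the Hessian in that basis, which a symmetric tensor may realize freely. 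Hence the system is solvable, producing a frame quasi holomorphic at $p$. I would emphasize that no integrability of the almost CR structure is used, and that the $m-2m_0$ transverse directions are irrelevant here because the inner bracket is projected to $HM$ before the outer bracket is formed.
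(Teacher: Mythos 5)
Your proposal is correct and follows essentially the same route as the paper: two successive gauge transformations of an arbitrary initial frame, first prescribing antiholomorphic first derivatives of the gauge matrix to kill $[e_i,\bar e_j]_{1,0}(p)$ and then prescribing mixed second derivatives to kill $[[e_i,\bar e_j]_H,e_k]_{1,0}(p)$, together with the tensoriality argument reducing both conditions to frame vectors. The only difference is organizational (you front-load the reduction step, and you spell out the solvability of the second-order jet prescription, which the paper leaves implicit).
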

\begin{proof}
The proof is similar to the case of almost complex manifolds \cite{Yu1}. We first show that there always exist a local $(1,0)$ pseudoholomorphic frame around each point. Assume that $\{v_1, \cdots, v_{m_0}\}$ is an arbitrary local $(1,0)$ frame of $T_{1,0}M$ around $p$ with $[v_i,\b{v}_j]_{1,0}=c_{i\b{j}}^kv_k$. Define $u_i=f_{i}^jv_j$ with $f_i^j(p)=\delta_i^j, \b{v}_k(f_i^j)(p)=c_{i\b{k}}^j(p)$. Then at $p$ we have $$[u_i,\b{v}_k]_{1,0}(p)=f_i^j(p)c_{j\b{k}}^l(p)v_l-\b{v}_k(f_i^j)(p)v_j=0.$$
For each $(1,0)$ vector $v=\a^k v_k$, $[u_i,\b{v}]_{1,0}(p)=\a^k[u_i,\b{v}_k]_{1,0}(p)=0$. So $u_i$ is pseudoholomorphic at $p$ for each $i$.

From $\{u_i, 1\leq i\leq m_0\}$, we could construct a quasi holomorphic frame $\{e_i,1\leq i\leq m_0\}$. Assume that $[u_i, \b{u}_j]_{1,0}=b_{i\b{j}}^ku_k$, such that $b_{i\b{j}}^k(p)=0$ due to the pseudoholomorphic property. Define $e_i=\phi_i^j u_j$ with \begin{align*}
\phi_i^j(p)=\delta_i^j, \b{u}_k(\phi_i^j)(p)=0, \ \text{and}\ \  u_k\b{u}_l(\phi_i^j)(p)=u_k(b_{i\b{l}}^j)(p).\end{align*} Then we claim that $\{e_i\}$ forms a local quasi holomorphic frame around $p$. To see this, first we have
$$[e_i,\b{u}_k]_{1,0}(p)=\phi_i^j[u_j,\b{u}_k]_{1,0}(p)-\b{u}_k(\phi_i^j)(p)u_j=0$$ for each $k$. So as before, $e_i$ is pseudoholomorphic at $p$. Then we compute
\begin{align*} &[[e_i,\b{u}_l]_H,u_k]_{1,0}(p)=[[e_i,\b{u}_l]_{1,0}+[e_i,\b{u}_l]_{0,1},u_k]_{1,0}(p)\\ &=[[e_i,\b{u}_l]_{1,0},u_k]_{1,0}(p)=[\phi_i^j b_{j\b{l}}^m u_m-\b{u}_l(\phi_i^j)u_j,u_k]_{1,0}(p)\\ &=\phi_i^j b_{j\b{l}}^m(p)[u_m,u_k]_{1,0}(p)-u_k(\phi_i^j b_{j\b{l}}^m)(p)u_m-\b{u}_l(\phi_i^j)(p)[u_j,u_k]_{1,0}(p)+u_k\b{u}_l(\phi_i^j)u_j\\
&=-\phi_i^j(p)u_k(b_{j\b{l}}^m)u_m+u_k\b{u}_l(\phi_i^j)(p)u_j=0
\end{align*} for each $k,l$. Now for each $(1,0)$ pseudoholomorphic vector fields $\a=\a^ju_j, \beta=\beta^j u_j$ around $p$, we have $\b{w}(\a^j)(p)=\b{w}(\beta^j)(p)=w(\b{\a}^j)(p)=w(\b{\beta}^j)(p)=0$ for each $(1,0)$ vector $w$ due to the pseudoholomorphic
property. Then
\begin{align*}
&[[e_i,\b{\a}]_H,\beta]_{1,0}(p)=[[e_i,\b{\a}]_{1,0}+[e_i,\b{\a}]_{0,1},\beta]_{1,0}(p)\\&=[[e_i,\b{\a}]_{1,0},\beta]_{1,0}(p)=[\b{\a}^j[e_i,\b{u}_j]_{1,0},\beta]_{1,0}(p)\\&=\b{\a}^j([e_i,\b{u}_j]_{1,0}(p))(\beta^k)u_k(p)-\beta^k u_k(\b{\a}^j)(p)[e_i,\b{u}_j]_{1,0}(p)+\b{\a}^j\beta^k[[e_i,\b{u}_j]_{1,0},u_k]_{1,0}(p)\\&=\b{\a}^j\beta^k[[e_i,\b{u}_j]_H,u_k]_{1,0}(p)=0.
\end{align*}
So $\{e_i,1\leq i\leq m_0\}$ forms a local quasi holomorphic frame around $p$ by definition.
\end{proof}

Let $(M, HM, J, \theta)$ be a pseudo-Hermitian manifold. We then get a better quasi holomorphic frame as follows.

\begin{prop} \label{3.3}
Let $(M, HM, J, \theta)$ be a pseudo-Hermitian manifold and $p\in M$. Then there always exists a local (1,0)-frame $\{e_1, e_2, \cdots, e_{m_0}\}$ around $p$ satisfying:
 \begin{itemize}
\item[(i)] $e_i$ is quasi holomorphic at $p$;
\item[(ii)] $g_{i\b{j}}(p)=\d_{i\b{j}}$ and $d^Hg_{i\b{j}}(p)=0$, where $d^H$ is the horizontal differential;
\item[(iii)] $[\xi,e_i]_{1,0}(p)=0$ for each $i$, with $\xi$ being the Reeb vector field.
\end{itemize}
\end{prop}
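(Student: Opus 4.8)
The plan is to start from a quasi holomorphic frame $\{u_1,\dots,u_{m_0}\}$ at $p$ furnished by Proposition \ref{3.2} and to correct it by a matrix of functions, $e_i=A_i^j u_j$, whose $2$-jet at $p$ is prescribed so as to secure (i)--(iii) simultaneously. After a preliminary constant linear change of frame I may assume $h_{a\bar b}(p):=g_\theta(u_a,\bar u_b)(p)=\delta_{a\bar b}$; since a constant-coefficient substitution commutes with all the brackets in the definition, it keeps the $u_i$ quasi holomorphic. I will then take $A_i^j$ smooth with $A_i^j(p)=\delta_i^j$ and $A$ invertible near $p$, so that $\{e_i\}$ is again a local $(1,0)$-frame and $g_{i\bar j}(p)=\delta_{i\bar j}$ holds automatically.

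The structural observation driving the proof is that the substitution $e_i=A_i^j u_j$ affects the three properties through \emph{disjoint} parts of the $2$-jet of $A$ at $p$. Repeating the bracket computation in the proof of Proposition \ref{3.2} verbatim (with $A$ in place of $\phi$) shows that $e_i$ is pseudoholomorphic at $p$ as soon as $\bar u_l(A_i^j)(p)=0$, and quasi holomorphic at $p$ provided in addition $u_k\bar u_l(A_i^m)(p)=u_k(b_{i\bar l}^m)(p)$, where $[u_i,\bar u_l]_{1,0}=b_{i\bar l}^m u_m$; since $b_{i\bar l}^m(p)=0$, the holomorphic and $\xi$-derivatives of $A$ never enter these two conditions, which is exactly what leaves room for (ii) and (iii). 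Writing $g_{i\bar j}=A_i^a\bar A_j^b h_{a\bar b}$ and differentiating along $u_k$ at $p$, the cross term drops because $u_k(\bar A_j^b)(p)=\overline{\bar u_k(A_j^b)(p)}=0$, leaving $u_k(g_{i\bar j})(p)=u_k(A_i^j)(p)+u_k(h_{i\bar j})(p)$; hence the prescription $u_k(A_i^j)(p)=-u_k(h_{i\bar j})(p)$ kills the holomorphic horizontal derivatives, and by the reality relation $g_{i\bar j}=\overline{g_{j\bar i}}$ the antiholomorphic ones vanish too, giving $d^H g_{i\bar j}(p)=0$. Finally, expanding $[\xi,e_i]_{1,0}(p)=\xi(A_i^j)(p)u_j+[\xi,u_i]_{1,0}(p)$ and setting $[\xi,u_i]_{1,0}(p)=s_i^j u_j(p)$, the choice $\xi(A_i^j)(p)=-s_i^j$ yields (iii).

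It remains to exhibit a single smooth $A_i^j$ realizing all these requirements at once, and this is the only step where I expect real care to be needed. The requirements constrain distinct components of the jet: the value $A_i^j(p)=\delta_i^j$; the antiholomorphic, holomorphic and transversal first derivatives $\bar u_l(A_i^j)(p)=0$, $u_k(A_i^j)(p)=-u_k(h_{i\bar j})(p)$, $\xi(A_i^j)(p)=-s_i^j$; and only the mixed second derivatives in the fixed order $u_k\bar u_l(A_i^m)(p)=u_k(b_{i\bar l}^m)(p)$. Because $\{u_1,\dots,u_{m_0},\bar u_1,\dots,\bar u_{m_0},\xi\}$ is a frame of $TM\otimes\mathbb{C}$ near $p$, prescribing the full first-order directional derivatives is unobstructed, and prescribing $u_k\bar u_l(A)(p)$ alone---without touching $u_k u_l$, $\bar u_k\bar u_l$ or the transversal second derivatives---involves independent entries of the symmetric coordinate Hessian together with already-fixed lower-order data; crucially I never prescribe both $u_k\bar u_l$ and $\bar u_l u_k$, so the non-commutativity $[u_k,\bar u_l]\neq 0$ produces no inconsistency. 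Thus a quadratic polynomial in any coordinate chart centered at $p$ furnishes the desired $A_i^j$, and $\{e_i=A_i^j u_j\}$ is the required frame.
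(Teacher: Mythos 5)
Your proposal is correct and takes essentially the same route as the paper: both correct a pseudoholomorphic (resp.\ quasi holomorphic) frame by a matrix $A_i^j$ whose value, first derivatives along $u_k,\bar u_k,\xi$, and mixed second derivatives $u_k\bar u_l$ at $p$ are prescribed independently, with the same observation that the quasi holomorphic conditions only constrain the $\bar u_k$- and $u_k\bar u_l$-components of the jet. The only difference is cosmetic and concerns (ii): the paper arranges $\nabla_X e_j(p)=0$ for the Tanaka--Webster connection (using the torsion identity to get $\Gamma_{\bar k j}^i(p)=0$) and then invokes metric compatibility, whereas you differentiate the transformation law of $g_{i\bar j}$ directly; the resulting prescription for $u_k(A_i^j)(p)$ is the same.
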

\begin{proof}
We follow the same construction as in Proposition \ref{3.2}, with some additional requirements. First, we choose the initial frame $\{v_i\}$ to be unitary at $p$. Using the same procedure to define $u_i$ (namely, $u_i=f_{i}^jv_j$ with $f_i^j(p)=\delta_i^j, \b{v}_k(f_i^j)(p)=c_{i\b{k}}^j(p)$) so that $u_i$ is pseudoholomorphic for each $i$. As $f_i^j(p)=\delta_i^j$, $\{u_i\}$ are also unitary at $p$. Consider the Tanaka-Webster connection $\nabla$. As $\nabla J=0$, we have $\nabla_{\b{u}_i} u_j=\Gamma_{\b{i}j}^ku_k, \nabla_{u_i} u_j=\Gamma_{ij}^ku_k$. Since $T_\nabla(\b{u}_i,u_j)=2\sqrt{-1}\delta_{ij}\xi$, we get that $\nabla_{\b{u}_i} u_j=[\b{u}_i, u_j]_{1,0}$. So $\Gamma_{\b{i}j}^k(p)=0$ for any $i,j,k$.

Assume at $p$, $[\xi,u_i]_{1,0}=t_i^ju_j$. We then define $e_i=\phi_i^ju_j$ with the previous requirements that $\phi_i^j(p)=\delta_i^j$, $\b{u}_k(\phi_i^j)(p)=0$, $u_k\b{u}_l(\phi_i^j)(p)=u_k(b_{i\b{l}}^j)(p)$ and the additional requirements: $u_k(\phi_i^j)(p)=-\Gamma_{ij}^k(p)$, $\xi(\phi_i^j)(p)=-t_i^j(p)$. This is possible since $u_k,\b{u}_k,\xi$ are all
linearly independent. Then $\{e_i\}$ is a local quasi holomorphic frame as before, satisfying (i). Also at $p$, we have $\nabla_{u_i}e_j=u_i(\phi_j^k)u_k+\phi_j^k\Gamma_{ik}^lu_l=0$ and $\nabla_{\b{u}_j}e_k=\b{u}_i(\phi_j^k)u_k+\phi_j^k\Gamma_{\b{i}k}^lu_l=0$. So $\nabla_Xe_k=\nabla_X \b{e}_k=0$ for any $X\in HM$. Then $Xg_{i\b{j}}=g(\nabla_X e_i,\b{e}_j)+g(e_i,\nabla_X \b{e}_j)=0$. Therefore, $d_Hg_{i\b{j}}=0$ satisfying (ii). For (iii), we directly get $[\xi,e_i]_{1,0}(p)=\xi(\phi_i^j)(p)u_j+\phi_i^j(p)t_j^k(p)u_k=0$. So the frame $\{e_i,1\leq i\leq m_0\}$ satisfies all the properties.
\end{proof}

\begin{defn} Let $(M, HM, J, \theta)$ be a $m=2m_0+1$ dimensional pseudo-Hermitian manifold and $p\in M$. A local frame $\{e_i,1\leq i\leq m_0\}$ is call a normal quasi holomorphic frame at $p$ if it satisfies: (i) $e_i$ is quasi holomorphic at $p$ for each i, (ii) $g_{i\b{j}}(p)=\d_{i\b{j}}$ and $d^Hg_{i\b{j}}(p)=0$, (iii) $[\xi,e_i]_{1,0}(p)=0$ for each $i$.
\end{defn}

By Proposition \ref{3.3}, there always exist local normal quasi holomorphic frames around any point on a pseudo-Hermitian manifold. Assume that $\{e_1, e_2, \cdots, e_{m_0}\}$ is a normal quasi holomorphic frame at $p\in M$. Let $\nabla$ be the Tanaka-Webster connection with $\nabla_{e_i}e_j=\Gamma_{ij}^ke_k$, $\nabla_{\b{e}_i}e_j=\Gamma_{\b{i}j}^ke_k$. Let $R$ be the curvature tensor. We have
\begin{prop}\label{normal}
Let $(M, HM, J, \theta)$ be a $2m_0+1$-dimensional pseudo-Hermitian manifold. For a normal quasi holomorphic frame at $p\in M$, the following hold at $p$:
\begin{align*}
\Gamma_{\b{i}j}^k=\Gamma_{ij}^k=0,\ \ \p\Gamma_{\b{i}j}^k=\b{\p}\Gamma_{\b{i}j}^k=0,\ \ \
R_{i\b{j}k\b{l}}&=g(R(e_i,\b{e}_j)e_k,\b{e}_l)=-\b{e}_je_i(g_{k\b{l}}).
\end{align*}
\end{prop}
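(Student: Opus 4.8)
The plan is to verify the three claims in order, using directly the defining properties of the normal quasi holomorphic frame established in Proposition \ref{3.3}.

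First, for the vanishing of the Christoffel symbols, I would recall from the construction in the proof of Proposition \ref{3.3} that $\nabla_X e_k = \nabla_X \bar{e}_k = 0$ for every $X\in HM$ at $p$; in particular $\nabla_{e_i}e_j = \nabla_{\bar{e}_i}e_j = 0$ at $p$, which is exactly $\Gamma_{ij}^k(p)=\Gamma_{\bar{i}j}^k(p)=0$. Alternatively, $\Gamma_{\bar{i}j}^k(p)=0$ follows from the torsion identity $T_\nabla(\bar{e}_i,e_j)=2\sqrt{-1}\delta_{ij}\xi$ forcing $\nabla_{\bar{e}_i}e_j=[\bar{e}_i,e_j]_{1,0}$, combined with the pseudoholomorphic property $[\bar{e}_i,e_j]_{1,0}(p)=0$, while $\Gamma_{ij}^k(p)=0$ comes from the additional normalization $e_k(\phi_i^j)(p)=-\Gamma_{ij}^k(p)$ imposed in the proof. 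This step is essentially bookkeeping of what was already arranged.

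Next, for the first-derivative identities $\partial\Gamma_{\bar{i}j}^k=\bar{\partial}\Gamma_{\bar{i}j}^k=0$ at $p$, I would differentiate the relation $\Gamma_{\bar{i}j}^k e_k = [\bar{e}_i,e_j]_{1,0}$ (valid in a neighborhood of $p$ by the torsion condition) and evaluate at $p$. Applying $e_l$ and $\bar{e}_l$ to both sides and using $\nabla e_k = 0$ at $p$, the derivatives of $\Gamma_{\bar{i}j}^k$ are controlled by higher bracket expressions of the frame. The point is that the quasi holomorphic condition $[[e_i,\bar{e}_j]_H,e_k]_{1,0}(p)=0$ (and its conjugate) precisely makes the relevant iterated brackets vanish at $p$; since $[\bar{e}_i,e_j]_{1,0}=-\overline{[\bar{e}_j,e_i]_{0,1}}$ type manipulations connect $[\bar{e}_i,e_j]_H$ to the quantity appearing in the quasi holomorphic definition, the derivatives $e_l\Gamma_{\bar{i}j}^k(p)$ and $\bar{e}_l\Gamma_{\bar{i}j}^k(p)$ are expressible through these vanishing brackets. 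This is the step I expect to be the main obstacle, since one must carefully match the third-order Taylor data built into the frame (the conditions on $e_k\bar{e}_l(\phi_i^j)(p)$ and $\bar{u}_k(\phi_i^j)(p)=0$) against the Leibniz expansion of the bracket, keeping track of which terms survive at $p$.

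Finally, for the curvature formula, I would start from $R(e_i,\bar{e}_j)e_k = \nabla_{e_i}\nabla_{\bar{e}_j}e_k - \nabla_{\bar{e}_j}\nabla_{e_i}e_k - \nabla_{[e_i,\bar{e}_j]}e_k$. At $p$ the second term vanishes because $\nabla_{e_i}e_k=0$ holds only pointwise, so I must instead write $\nabla_{\bar{e}_j}\nabla_{e_i}e_k = \nabla_{\bar{e}_j}(\Gamma_{ik}^m e_m)$ and evaluate, which produces $\bar{e}_j(\Gamma_{ik}^m)(p)e_m = 0$ by the first-derivative vanishing just proved together with $\Gamma_{ik}^m(p)=0$. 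Likewise the bracket term $\nabla_{[e_i,\bar{e}_j]}e_k$ contributes only through the vertical part $2\sqrt{-1}\delta_{ij}\xi$ plus horizontal brackets whose connection coefficients vanish at $p$; the transversal piece can be absorbed since we pair against $\bar{e}_l\in T_{0,1}M$. Thus at $p$ only $\nabla_{e_i}\nabla_{\bar{e}_j}e_k = e_i(\Gamma_{\bar{j}k}^m)e_m$ survives, giving $R_{i\bar{j}k\bar{l}}(p)=e_i(\Gamma_{\bar{j}k}^l)(p)$. To convert this into $-\bar{e}_j e_i(g_{k\bar{l}})$, I would differentiate $g_{k\bar{l}}=g(e_k,\bar{e}_l)$ twice and use metric compatibility $\nabla g=0$ together with $d^H g_{k\bar{l}}(p)=0$, expressing $\bar{e}_j e_i(g_{k\bar{l}})$ in terms of $g(\nabla_{e_i}e_k,\nabla_{\bar{e}_j}\bar{e}_l)$-type corrections and the curvature term; a short computation identifying $e_i(\Gamma_{\bar{j}k}^l)$ with $-\bar{e}_j e_i(g_{k\bar{l}})$ then closes the argument.
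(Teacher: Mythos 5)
Your first two steps are essentially the paper's argument: the paper derives $\Gamma_{ij}^k(p)=0$ from $d^Hg_{j\bar k}(p)=0$ and $\Gamma_{\bar i j}^k(p)=0$ via metric compatibility (rather than by appealing to the construction in Proposition \ref{3.3}, which is safer since the proposition is stated for any frame satisfying the definition), and it obtains $e_l(\Gamma_{\bar i j}^k)(p)=0$ by expanding $\nabla_{e_l}\nabla_{\bar e_i}e_j$ using the torsion identity, so that the quasi holomorphic bracket condition $[[e_j,\bar e_i]_H,e_l]_{1,0}(p)=0$ is what kills the surviving term --- the same mechanism you describe. The genuine problem is in your curvature step, where you have the two second-order terms exactly backwards. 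You claim $\nabla_{\bar e_j}\nabla_{e_i}e_k=\bar e_j(\Gamma_{ik}^m)e_m$ vanishes at $p$ ``by the first-derivative vanishing just proved''; but what was proved is $\partial\Gamma_{\bar i j}^k=\bar\partial\Gamma_{\bar i j}^k=0$, i.e. vanishing of first derivatives of the \emph{mixed} symbols $\Gamma_{\bar i j}^k$ only. No such statement holds for $\bar e_j(\Gamma_{ik}^l)$: differentiating $e_ig_{k\bar l}=\Gamma_{ik}^rg_{r\bar l}+\Gamma_{i\bar l}^{\bar r}g_{k\bar r}$ gives $\bar e_j(\Gamma_{ik}^l)(p)=\bar e_je_i(g_{k\bar l})(p)$, which is precisely $-R_{i\bar jk\bar l}(p)$ and is not zero in general. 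It is the \emph{other} term, $\nabla_{e_i}\nabla_{\bar e_j}e_k=e_i(\Gamma_{\bar jk}^m)e_m+\Gamma_{\bar jk}^m\nabla_{e_i}e_m$, that vanishes at $p$ by your step two. Your chain of equalities $R_{i\bar jk\bar l}(p)=e_i(\Gamma_{\bar jk}^l)(p)$ would therefore force the curvature to vanish identically at $p$, which is absurd; the correct route is $R_{i\bar jk\bar l}(p)=-\bar e_j(\Gamma_{ik}^l)(p)=-\bar e_je_i(g_{k\bar l})(p)$.

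A second, smaller gap in the same step: the term $\nabla_{[e_i,\bar e_j]}e_k(p)=-2\sqrt{-1}\,\delta_{ij}\nabla_\xi e_k(p)$ is a $(1,0)$ vector, so $g(\nabla_\xi e_k,\bar e_l)$ does \emph{not} vanish for type reasons when paired against $\bar e_l$ --- the pairing of a $(1,0)$ vector with a $(0,1)$ vector is exactly the nontrivial part of $g$. Killing this term requires condition (iii) of the normal frame, $[\xi,e_k]_{1,0}(p)=0$, together with the identity $\nabla_\xi e_k=[\xi,e_k]_{1,0}$ (which uses $\nabla\xi=0$ and the fact that the pseudo-Hermitian torsion maps $T_{1,0}M$ into $T_{0,1}M$). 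Your proposal never invokes property (iii), so this contribution is not actually controlled.
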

\begin{proof} As $\nabla_{\b{e}_i}e_j(p)=[\b{e}_i,e_j]_{1,0}(p)=0$ and $$g(\nabla_{e_i}e_j,\b{e}_k)(p)=e_i(g_{j\b{k}})(p)-g(e_j,\nabla_{e_i}\b{e}_k)(p)=0$$ for any $i,j,k$, we get that $\Gamma_{\b{i}j}^k=\Gamma_{ij}^k=0$. To compute $\p\Gamma_{\b{i}j}^k$, we have $$\nabla_{e_l}\nabla_{\b{e}_i}e_j(p)=e_l(\Gamma_{\b{i}j}^k)(p)e_k+\Gamma_{\b{i}j}^k\nabla_{e_l}e_k(p)=e_l(\Gamma_{\b{i}j}^k)(p)e_k.$$ Also, as $T_\nabla(e_i,\b{e}_j)=2\sqrt{-1}\delta_{ij} \xi$,
\begin{align*}g(\nabla_{e_l}\nabla_{\b{e}_i}e_j, \b{e}_k)&=g(\nabla_{e_l}\nabla_{e_j}\b{e}_i,\b{e}_k)+g(\nabla_{e_l}[\b{e}_i,e_j]_H,\b{e}_k)\\&=g(\nabla_{e_l}[\b{e}_i,e_j]_H,\b{e}_k)=g(\nabla_{[\b{e}_i,e_j]_H}e_l,\b{e}_k)+g(T_\nabla(e_l,[\b{e}_i,e_j]_H),\b{e}_k).
\end{align*}
At $p$, we have $[\b{e}_i,e_j]_H=\nabla_{\b{e}_i}e_j-\nabla_{e_j}\b{e}_i=0$. From the above, we get that $\nabla_{e_l}\nabla_{\b{e}_i}e_j=0$ for any $i,j,l$. So $e_l(\Gamma_{\b{i}j}^k)(p)=0$ which gives that $\p\Gamma_{\b{i}j}^k=0$. Similarly, we also get that $\b{\p}\Gamma_{\b{i}j}^k=0$.

To compute the curvature, we have $\nabla_{e_i}\nabla_{\b{e}_j}e_k(p)=0$, $[e_i,\b{e}_j](p)=[e_i,\b{e}_j]_H(p)+g([e_i,\b{e}_j],\xi)\xi=-2\sqrt{-1}\delta_{ij}\xi$, and $\nabla_{\xi}e_i(p)=[\xi,e_i]_{1,0}(p)=0$. Then at $p$,
\begin{align*}
R_{i\b{j}k\b{l}}(p)&=g(\nabla_{e_i}\nabla_{\b{e}_j}e_k-\nabla_{\b{e}_j}\nabla_{e_i}e_k-\nabla_{[e_i,\b{e}_j]}e_k, \b{e}_l)(p)\\&=-g(\nabla_{\b{e}_j}\nabla_{e_i}e_k,\b{e}_l)(p)+2\sqrt{-1}\delta_{ij}g(\nabla_{\xi}e_k,\bar{e}_l)=-g(\nabla_{\b{e}_j}(\Gamma_{ik}^r e_r),\b{e}_l)(p)\\&=-\b{e}_j(\Gamma_{ik}^l)(p)-g(\Gamma_{ik}^p\nabla_{\b{e}_j}e_k,\b{e}_l)(p)=-\b{e}_j(\Gamma_{ik}^l)(p).
\end{align*}
Also, as $e_ig_{k\b{l}}=g(\nabla_{e_i}e_k,\b{e}_l)+g(e_k,\nabla_{e_i}\b{e}_l)=\Gamma_{ik}^rg_{r\b{l}}+\Gamma_{i\b{l}}^{\b{r}}g_{k\b{r}}$, then $$\b{e}_je_ig_{k\b{l}}=\b{e}_j(\Gamma_{ik}^r)g_{r\b{l}}+\Gamma_{ik}^r\b{e}_j(g_{r\b{l}})
+\b{e}_j(\Gamma_{i\b{l}}^{\b{r}})g_{k\b{r}}
+\Gamma_{i\b{l}}^{\b{r}}\b{e}_j(g_{k\b{r}}).$$ At $p$, as $\b{e}_j(g_{r\b{l}})=\b{e}_j(g_{k\b{r}})=0$ and $\b{e}_j(\Gamma_{i\b{l}}^{\b{r}})=\overline{e_j(\Gamma_{\b{i}l}^{r})}=0$, we get that $\b{e}_j(\Gamma_{ik}^l)(p)=\b{e}_je_i(g_{k\b{l}})(p)$. So $R_{i\b{j}k\b{l}}(p)=-\b{e}_je_i(g_{k\b{l}})(p).$
\end{proof}

When $m=2m_0$ and $(M,J,g)$ is an almost Hermitian manifold, the existence of the local normal quasi holomorphic frame and its properties have been established in \cite{Yu1} (Lemma 3.4, Lemma 4.1, Corollary 4.1 there).
\begin{prop}[Yu] \label{Yu}
Let $(M,J, g)$ be an $2m_0$-dimensional almost Hermitian manifold. Then for any $p\in M$, there exists a local (1,0)-frame $\{e_1, e_2, \cdots, e_{m_0}\}$ around $p$ such that $e_i$ is quasi holomorphic at $p$ and $g_{i\b{j}}(p)=\d_{i\b{j}}$, $dg_{i\b{j}}(p)=0$. With respect to the frame, the Christoffel symbols and the curvature components of the Chern connection satisfies: $$\Gamma_{ij}^k=\Gamma_{\b{i}j}^k=0,\ \p\Gamma_{\b{i}j}^k=0,
\ \ R_{i\b{j}k\b{l}}=-\b{e}_je_i(g_{k\b{l}}).$$
\end{prop}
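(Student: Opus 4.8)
The plan is to transplant the construction and computations of Propositions~\ref{3.2}, \ref{3.3} and \ref{normal} to the almost Hermitian setting, where the situation is in fact simpler: since $m=2m_0$ there is no Reeb field, the whole tangent bundle is $T_{1,0}M\oplus T_{0,1}M$, and condition (iii) of a normal quasi holomorphic frame becomes vacuous. The Tanaka--Webster connection is replaced throughout by the Chern connection $\nabla$ of $(M,J,g)$, which is the unique connection with $\nabla J=0$, $\nabla g=0$ whose torsion has vanishing $(1,1)$-part; this last property plays the role that the torsion identity $T_\nabla(\bar u_i,u_j)=2\sqrt{-1}\delta_{ij}\xi$ played above.

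First I would produce the frame. Starting from a frame $\{v_i\}$ unitary at $p$, form the pseudoholomorphic frame $u_i=f_i^jv_j$ exactly as in Proposition~\ref{3.2}, and then the quasi holomorphic frame $e_i=\phi_i^ju_j$ as in Proposition~\ref{3.3}, dropping the (now vacuous) $\xi$-requirement but keeping $\phi_i^j(p)=\delta_i^j$, $\bar u_k(\phi_i^j)(p)=0$, $u_k\bar u_l(\phi_i^j)(p)=u_k(b_{i\bar l}^j)(p)$ together with the first-order normalization $u_k(\phi_i^j)(p)=-\Gamma_{ij}^k(p)$, where $\Gamma$ now denotes the Chern connection symbols. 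The last condition gives $\Gamma_{ij}^k(p)=0$. For $\Gamma_{\bar i j}^k$, the vanishing of the $(1,1)$-part of the Chern torsion yields $\nabla_{\bar e_i}e_j=[\bar e_i,e_j]_{1,0}$ (the analogue of $\nabla_{\bar u_i}u_j=[\bar u_i,u_j]_{1,0}$ used in Proposition~\ref{3.3}), so pseudoholomorphicity at $p$ gives $\Gamma_{\bar i j}^k(p)=0$. Together these force $\nabla_X e_k(p)=0$ for every $X$, hence $dg_{i\bar j}(p)=0$.

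For the second-order statement $\partial\Gamma_{\bar i j}^k=0$ and the curvature formula I would repeat the computation of Proposition~\ref{normal} with the $\xi$-terms deleted. One writes $\nabla_{e_l}\nabla_{\bar e_i}e_j(p)=e_l(\Gamma_{\bar i j}^k)(p)\,e_k$, re-expresses the left side through $\nabla_{\bar e_i}e_j=\nabla_{e_j}\bar e_i+[\bar e_i,e_j]$ and the structure equations, and uses that the bracket $[\bar e_i,e_j](p)=\nabla_{\bar e_i}e_j(p)-\nabla_{e_j}\bar e_i(p)$ vanishes since $\Gamma_{\bar i j}^k(p)=0$; this yields $e_l(\Gamma_{\bar i j}^k)(p)=0$. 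The curvature identity then follows by computing $R_{i\bar j k\bar l}(p)=-\bar e_j(\Gamma_{ik}^l)(p)$ from the vanishing of all first symbols at $p$, and matching $\bar e_j(\Gamma_{ik}^l)(p)$ with $\bar e_j e_i(g_{k\bar l})(p)$ by differentiating $e_i g_{k\bar l}=\Gamma_{ik}^r g_{r\bar l}+\Gamma_{i\bar l}^{\bar r}g_{k\bar r}$.

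The main obstacle is the non-integrability, concentrated in the step proving $\partial\Gamma_{\bar i j}^k=0$: unlike the Kähler case the brackets $[\bar e_i,e_j]_{1,0}$ vanish only at $p$ and the Chern torsion carries nonzero $(2,0)$ and $(0,2)$ parts, so one must check that differentiating the torsion identity produces no surviving Nijenhuis-type terms at $p$. This is exactly the point where the second-order (quasi holomorphic) normalization $u_k\bar u_l(\phi_i^j)(p)=u_k(b_{i\bar l}^j)(p)$ is consumed, mirroring its role in Proposition~\ref{normal}.
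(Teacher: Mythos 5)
Your proposal is correct and follows essentially the argument the paper itself relies on: the paper gives no proof of Proposition \ref{Yu} (it cites Yu's Lemma 3.4, Lemma 4.1 and Corollary 4.1), but its own Propositions \ref{3.2}, \ref{3.3} and \ref{normal} are explicitly modeled on that almost-Hermitian argument, and your transplantation back --- replacing the Tanaka--Webster connection by the Chern connection, using the vanishing of the $(1,1)$-part of the torsion in place of $T_\nabla(\b{u}_i,u_j)=2\sqrt{-1}\delta_{ij}\xi$, and dropping the Reeb conditions --- is exactly that argument. You also correctly flag the one delicate point, namely that $\p\Gamma_{\b{i}j}^k(p)=0$ needs the second-order (quasi holomorphic) normalization to kill $[e_l,[\b{e}_i,e_j]]_{1,0}(p)$, which the vanishing of the bracket at $p$ alone does not give.
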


In the rest of the section, we discuss properties of the partial horizontal exterior differential of forms on an almost CR manifold. Let $(M,HM,J)$ be a codimension-($m-2m_0$) almost CR manifold. By the existence of Riemannian metrics, there always exists a splitting $TM=HM\oplus F$, where $F$ is a ($m-2m_0$) subbundle in $TM$. Denote $T_{1,0}(M), T_{0,1}(M)$ the $i,-i$-eigenbundle of $J$ in $HM\otimes \B C$. Then $TM\otimes \B C=T_{1,0}(M)\oplus T_{0,1}(M)\oplus F\otimes \B C$. Let $T^{1,0}(M), T^{0,1}(M)$ be the dual spaces of $T_{1,0}(M), T_{0,1}(M)$. Denote $\wedge^k(M)=\wedge^k (T^*M\otimes \B C)$ and $\wedge^{p,q}(M)=\wedge^p(T^{1,0}(M))\otimes \wedge^q(T^{0,1}(M)$. Then we have
$$\wedge^k(M)=\oplus_{p+q+r=k} \wedge^{p,q}(M)\otimes \wedge^r (F^*\otimes \B C)$$ for any $k>0$. There is a natural real linear conjugate map from $\wedge^{p,q}(M)$ to $\wedge^{q,p}(M)$, mapping any $\vphi$ to $\b{\vphi}$. For $p+q=k$, denote $\pi^{p,q}: \wedge^k(M)\lra\wedge^{p,q}(M)$ the projection map. Given a $(p,q)$ form $\vphi\in \wedge^{p,q}(M)$, define
$$\p \vphi =\pi^{p+1,q}(d\phi), \ \ \ \bar{\p}\vphi=\pi^{p,q+1}(d\phi).$$
If $m-2m_0>0$, the $\p, \b{\p}$ operators would depend on the given splitting. Clearly, $\overline{\p \vphi}=\b{\p}\b{\vphi}$. From the definition, the Leibniz rule holds: $\p(g\vphi)=\p g\wedge \vphi+g\p\vphi, \bar{\p}(g\vphi)=\bar{\p}g\wedge \vphi+g\bar{\p}\vphi$ for smooth function $g$ on $M$.

We have
\begin{lem}\label{exter}
Let $(M,HM,J)$ and $(N,HN,J^N)$ be two almost CR manifolds and $f:M\lra N$ be an almost CR map. Assume that there are two splitting $TM=HM\oplus F_1$ and $TN=HN\oplus F_2$ such that $df(F_1)\subset F_2$. Then for any form $\vphi\in \wedge^{p,q}(N)$, we have $f^*\vphi\in \wedge^{p,q}(M)$. Also,  $$f^*(\p\vphi)=\p(f^*\vphi), \ \ \ f^*(\b{\p}\vphi)=\b{\p}(f^*\vphi).$$
\end{lem}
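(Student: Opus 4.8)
The plan is to verify the two claims separately, reducing the statement about $\p,\b\p$ to the already-known naturality of the full exterior derivative $d$ under pullback, together with the compatibility of $f^*$ with the bigrading. The key structural fact I would isolate first is that, because $f$ is an almost CR map satisfying $df\circ J = J^N\circ df$ on $HM$, the complexified tangent map sends $T_{1,0}(M)$ into $T_{1,0}(N)$ and $T_{0,1}(M)$ into $T_{0,1}(N)$; dually, the pullback $f^*$ sends $T^{1,0}(N)$ into $T^{1,0}(M)$ and $T^{0,1}(N)$ into $T^{0,1}(M)$. For the fiber directions, the hypothesis $df(F_1)\subset F_2$ gives dually that $f^*$ maps $F_2^*\otimes\B C$ into the span of $F_1^*\otimes\B C$ and the horizontal covectors; but in fact, since $df$ respects the splitting $TM=HM\oplus F_1$ componentwise (it maps $HM$ to $HN$ and $F_1$ to $F_2$), the pullback of a covector annihilating $F_2$ annihilates $F_1$, and the pullback of a covector annihilating $HN$ annihilates $HM$. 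This block-diagonality of $f^*$ with respect to the decomposition $TM\otimes\B C = T_{1,0}(M)\oplus T_{0,1}(M)\oplus F_1\otimes\B C$ is the whole point.

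From this I would deduce the first claim. Since $f^*$ is an algebra homomorphism on forms, it suffices to check it on generators: a $(p,q)$-form is locally a sum of wedge products of $p$ sections of $T^{1,0}(N)$ and $q$ sections of $T^{0,1}(N)$ (times functions). By the block-diagonality just established, $f^*$ carries each $T^{1,0}(N)$-factor into $T^{1,0}(M)$ and each $T^{0,1}(N)$-factor into $T^{0,1}(M)$, with no $F_1^*$-components appearing, so $f^*\vphi\in\wedge^{p,q}(M)$.

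For the second claim I would start from the classical identity $f^*(d\vphi)=d(f^*\vphi)$, which holds for any smooth map and any form. Given $\vphi\in\wedge^{p,q}(N)$, I decompose $d\vphi$ on $N$ into its bihomogeneous pieces under the decomposition of $\wedge^{p+q+1}(N)$; the pieces of type $(p+1,q)$ and $(p,q+1)$ are exactly $\p\vphi$ and $\b\p\vphi$ by definition. Applying $f^*$ and using that $f^*$ preserves bidegree (the first claim, applied to each component) gives
\begin{align*}
d(f^*\vphi)=f^*(d\vphi)=\sum_{r+s=p+q+1} f^*\bigl(\pi^{r,s}(d\vphi)\bigr),
\end{align*}
where each summand $f^*\bigl(\pi^{r,s}(d\vphi)\bigr)$ lies in $\wedge^{r,s}(M)$. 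Projecting both sides onto $\wedge^{p+1,q}(M)$ and onto $\wedge^{p,q+1}(M)$, and comparing with the definition $\p(f^*\vphi)=\pi^{p+1,q}(d(f^*\vphi))$, $\b\p(f^*\vphi)=\pi^{p,q+1}(d(f^*\vphi))$, yields $f^*(\p\vphi)=\p(f^*\vphi)$ and $f^*(\b\p\vphi)=\b\p(f^*\vphi)$.

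The main obstacle, and the place where the two splitting hypotheses are genuinely used, is justifying that $f^*$ commutes with the projection operators $\pi^{r,s}$; this is not automatic in the almost CR setting because the $\p,\b\p$ operators depend on the chosen complements $F_1,F_2$ and because $d\vphi$ may acquire components with nonzero $F^*$-degree (type $(p,q,1)$ components in $\wedge^{p,q}(M)\otimes\wedge^1(F^*\otimes\B C)$). I would handle this by checking that $f^*$ respects the full trigrading by $(p,q,r)$, which follows once the block-diagonality of $df$ with respect to $HM\oplus F_1\to HN\oplus F_2$ is established, so that the extraneous fiber-direction components on $N$ pull back to fiber-direction components on $M$ and do not interfere with the horizontal $(p+1,q)$ and $(p,q+1)$ projections. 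I expect the rest to be a routine bookkeeping of bidegrees.
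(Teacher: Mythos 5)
Your proposal is correct and follows essentially the same route as the paper: establish that $f^*$ preserves the (tri)grading induced by the splittings (the paper checks the absence of $F_1^*$-components via the contraction criterion $i_X(f^*\vphi)=f^*(i_{df(X)}\vphi)=0$ for $X\in F_1$, while you dualize the block-diagonality of $df$ and check on generators), then deduce the commutation with $\p$ and $\b\p$ from $d(f^*\vphi)=f^*(d\vphi)$ by projecting onto the $(p+1,q)$ and $(p,q+1)$ components. Your explicit justification that $f^*$ commutes with the projections $\pi^{r,s}$ spells out a step the paper states only as ``$f^*$ keeps the degrees,'' so no gap.
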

\begin{proof}
By definition, a form $\alpha$ is in $\wedge^{p,q}(M)$ if and only if the contraction $i_X(\alpha)=0$ for any $X\in F_1$. For $\vphi\in \wedge^{p,q}(N)$, as $df(F_1)\subset F_2$, we have $i_{df(X)}\vphi=0$. Then $i_X(f^*\vphi)=f^*(\iota_{df(X)}\vphi)=0$. So $f^*\vphi\in \wedge^{p,q}(M)$. For the differential, we have the classical relation $d(f^*\vphi)=f^*(d\vphi)$. As $f^*$ keeps the degrees, projecting the relation to $\wedge^{p+1,q}(M)$ and $\wedge^{p,q+1}(M)$ we get the above equalities.
\end{proof}

We also have the following maximum principle for a real function $u$.
\begin{lem}
Assume that $u$ is a real function on an almost CR manifold $(M,HM,J)$. Fix a splitting $TM=HM\oplus F$. If $u$ achieves a maximum at $p$, then $du(p)=0$ and $\p\b{\p}u(V,\b{V})(p)\leq 0$ for any $(1,0)$ vector field $V$ around $p$.
\end{lem}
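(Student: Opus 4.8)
The first assertion is the standard first-order necessary condition at an interior maximum: since $u$ is smooth and $p$ is a maximum, every directional derivative $X(u)(p)$ vanishes, so $du(p)=0$. All the content lies in the second assertion, and the plan is to unwind the definition of $\partial\bar\partial u$ and reduce it to a purely real Hessian computation along integral curves.

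Fix a $(1,0)$ vector field $V$ around $p$. First I would note that, since $\bar\partial u\in\wedge^{0,1}(M)$ and $V,\bar V$ lie in $HM\otimes\B C$, evaluating the $2$-form $d(\bar\partial u)$ on the pair $(V,\bar V)$ annihilates all of its bidegree components except the $(1,1)$ piece; hence $\partial\bar\partial u(V,\bar V)=d(\bar\partial u)(V,\bar V)$. Applying the Cartan formula for the exterior derivative of a $1$-form together with the identity $\bar\partial u(W)=W_{0,1}(u)$ for any complex vector $W$, and using $\bar\partial u(V)=0$ because $V$ is $(1,0)$, I obtain
$$\partial\bar\partial u(V,\bar V)=V\big(\bar V(u)\big)-[V,\bar V]_{0,1}(u).$$
At the point $p$ the second term is a first-order derivative of $u$, so it vanishes because $du(p)=0$; therefore $\partial\bar\partial u(V,\bar V)(p)=V(\bar V(u))(p)$.

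Writing $V=A+\sqrt{-1}\,B$ with $A,B$ real sections of $HM$, a direct computation gives $V\bar V u=(A^2+B^2)u-\sqrt{-1}\,[A,B]u$, and again $[A,B]u(p)=0$ by $du(p)=0$, so $\partial\bar\partial u(V,\bar V)(p)=(A^2u+B^2u)(p)$. Finally I would invoke the elementary one-variable second-derivative test: for the real vector field $A$ with integral curve $\gamma$ through $p$, one has $A^2u(p)=\frac{d^2}{dt^2}\big|_{t=0}u(\gamma(t))\le 0$, since $t\mapsto u(\gamma(t))$ attains a local maximum at $t=0$; the same bound holds for $B$. Adding the two inequalities yields $\partial\bar\partial u(V,\bar V)(p)\le 0$.

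The only place demanding genuine care is the bookkeeping in the second paragraph: one must verify both that the $(2,0)$, $(0,2)$ and $F$-valued pieces of $d(\bar\partial u)$ give no contribution when contracted against $(V,\bar V)$, and that the bracket term $[V,\bar V]_{0,1}(u)$ is indeed first order in $u$ so that it is killed by $du(p)=0$. Note that no integrability of the almost CR structure is required, and the fixed splitting $TM=HM\oplus F$ enters only through the definition of $\bar\partial$.
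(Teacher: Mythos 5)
Your proposal is correct and follows essentially the same route as the paper: both reduce $\p\b{\p}u(V,\b V)$ via the Cartan formula to $V\b V(u)-[V,\b V]_{0,1}(u)$, kill the bracket terms at $p$ using $du(p)=0$, and conclude by the negative semidefiniteness of the real Hessian applied to the real and imaginary parts of $V$ (the paper writes $V=X-\sqrt{-1}JX$ and uses the coordinate Hessian where you use integral curves, but this is the same argument).
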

\begin{proof}
By definition, we have $\p u(V)=V(u), \b{\p}u(V)=0, \b{\p}u(\b{V})=\b{V}(u)$. Then $$\p\b{\p}u(V,\b{V})=d\b{\p}u(V,\b{V})=V\b{V}(u)-[V,\b{V}]_{0,1}(u)$$
 At the maximum point $p$, if $(x_1,\cdots, x_m)$ is a coordinate chart of $M$, we have $du(p)=0$ and the matrix $(\f{\p^2 u}{\p x_i\p x_j})\leq 0$. So $YYu(p)\leq 0$ for any real vector field $Y$. Assume that $V=X-\sqrt{-1}JX$. At $p$, we get
\begin{align*}\p\b{\p}u(V,\b{V})&=XXu+JX(JX)u+\sqrt{-1}[X,JX]u-[V,\b{V}]_{0,1}(u)\\
&=XXu(p)+JX(JX)u(p)\leq 0
\end{align*}
\end{proof}

\noindent
We point out that $\p\b{\p}u(V,\b{V})$ is in general a complex function on $M$ if $m-2m_0>0$. When $m=2m_0$, i.e. $(M,J)$ is an almost complex manifold, direct calculation gives that $\p\b{\p}u=-\b{\p}\p u$. Then $\p\b{\p}u(V,\b{V})$ is a real function.

\section{Liouville type results}
In this section, we give the proofs of Theorem 1.1, 1.2, 1.3.

\subsection{Proof of Theorem 1.1}
 Let $(M, HM, J, \theta)$ be a $(2m_0+1)$ dimensional pseudo-Hermitian manifold and $(N,J^N,h)$ be a $2n_0$ dimensional almost Hermitian manifold. Assume that $f: M\lra N$ is a $(J,J^N)$ holomorphic map. Let $\xi$ be the Reeb vector field on $M$. By Definition \ref{def1} (i), $df(\xi)=0$ and $df(T_{1,0}(M))\subset T_{1,0}N$. Assume that $\{e_i, 1\leq i\leq m_0\}$ is any local $(1,0)$ frame on $M$, and $\{\t{e}_\a, 1\leq \a\leq n_0\}$ is any local $(1,0)$ frame on $N$ with $df(e_i)=f_i^\a \t{e}_{\a}$. Let $\theta^i$ and $\t{\theta}^\a$ be the coframes dual to $\{e_i\}$ and $\{\t{e}_\a\}$ on $M,N$. Then
\begin{align} \label{equa5}
f^*(\t{\theta}_\a)=f^\a_i\theta^i.\end{align}
The first structure equation of $M$ gives (\cite{DT}):
\begin{align}\label{equa9}
d\theta^i=\theta^j\wedge \theta_j^i+\theta\wedge A^i_{\b{j}}\b{\theta}^j,
\end{align}
where $A^i_{\b{j}}$ are the components of the pseudo-Hermitian torsion one form of $M$, $\theta_j^i=\Gamma_{kj}^i\theta^k+\Gamma_{\b{k}j}^i\b{\theta}^k,$ with $\Gamma_{kj}^i$ being the Christoffel symbols of the Tanaka-Webster connection.
The first structure equation of $N$ (\cite{CDRY}) gives
\begin{align} \label{equa8}
d\t{\theta}^\a=\t{\theta}^\beta\wedge \t{\theta}_\beta^\a+\frac{1}{2}T^\a_{\beta\g}\t{\theta}^\beta\wedge\t{\theta}^\g,
\end{align}
where $T^\a_{\beta\g}$ are the components of the torsion tensor of the Chern connection of  $N$ and $\t{\theta}^\a_\beta=\t{\Gamma}_{\gamma\beta}^\alpha\t{\theta}^\gamma+
\t{\Gamma}_{\b{\gamma}\beta}^\alpha\b{\t{\theta}}^\gamma$ with $\t{\Gamma}_{\gamma\beta}^\alpha$ being the Christoffel symbols. Taking exterior differential to (\ref{equa5}), we have:
\begin{align}
df^\a_i\wedge \theta^i+f^\a_i(\theta^j\wedge \theta_j^i+\theta\wedge A^i_{\b{j}}\b{\theta}^j)&=f^*(\t{\theta}^\beta\wedge \t{\theta}_\beta^\a+\frac{1}{2}T^\a_{\beta\g}\t{\theta}^\a\wedge\t{\theta}^\g) \notag \\
&=f^\beta_j\theta^j\wedge f^*\t{\theta}_\beta^\a+\frac{1}{2}f^*T^\a_{\beta\g}f^\a_i f^\g_j\theta^i\wedge \theta^j.  \label{equa6}
\end{align}
By comparing the $(1,1)$ form in equation (\ref{equa6}), we get the equation for $(0,1)$ form: \begin{align} \label{part11}
\b{\p} f^\a_i=f^{\a}_j\Gamma_{\b{k}i}^j\b{\theta}^k- f^{\beta}_i\t{\Gamma}^\a_{\b{\gamma}\beta}f^{\b{\g}}_{\b{j}}\b{\theta}^j.\end{align}
Also, comparing the components of $\theta\wedge \theta^i$, we get that
\begin{align} \label{xi} i_{\xi}df^\a_i=\xi(f^\a_i)=0.\end{align}
Taking partial exterior differential to (\ref{part11}) and applying Lemma \ref{exter}, we have  \begin{align}\label{part22}
\p\b{\p}  f^\a_i=\p\Gamma_{\b{k}i}^jf^{\a}_j\b{\theta}^k+\Gamma_{\b{k}i}^j\p(f^{\a}_j\b{\theta}^k)- f^*(\p\t{\Gamma}^\a_{\b{\gamma}\beta})f^{\beta}_if^{\b{\g}}_{\b{j}}\b{\theta}^j-
\t{\Gamma}^\a_{\b{\gamma}\beta}\p(f^{\beta}_if^{\b{\g}}_{\b{j}}\b{\theta}^j).
\end{align}

Next, we define and study a energy density function which was first introduced by Yang \cite{Yang1} on Hermitian manifolds. Denote $T_{1,0}(M)-\{0\}$ the space of nonzero $(1,0)$ vectors over $M$ which forms a $\mathbb C^{m_0}-\{0\}$ fiber bundle over $M$ with the projection map $\pi: T_{1,0}(M)-\{0\}\lra M$. Define the density function $\mathscr Y: T_{1,0}(M)-\{0\}\lra \mathbb R$ by $$\mathscr Y(W)=\dfrac{h(df(W),df(\bar{W}))}{g(W,\bar{W})},$$
where $W\in T_{1,0}(M)-\{0\}$. If the metrics $g$ and $h$ has local matrix expressions $(g_{i\b{j}}), (h_{\a\b{\beta}})$ and $W=W^ie_i$, then locally

$$ \mathscr Y(W)=\dfrac{h_{\a\bar{\beta}}f^{\a}_if^{\bar{\beta}}_{\bar{j}}W^i\overline{W^{j}}}{g_{k\bar{l}}W^{k}\overline{W^{l}}}.$$
From the definition, $f$ is constant if and only if $\mathscr Y\equiv 0$.

We will show by contradiction that under the conditions in Theorem 1.1, the function $\mathscr Y$ has to be zero. Denote $\mathbb PT_{1,0}(M)$ the complex projectivization of $T_{1,0}(M)$, which is a $\mathbb P^{m_0-1}$ fiber bundle over $M$. By the homogeneity, $\mathscr Y$ introduce a function on $\mathbb PT_{1,0}(M)$ which we still denote by $\mathscr Y$. As $M$ is compact, $\mathbb PT_{1,0}(M)$ is also compact. Suppose that $\mathscr Y$ is not constant zero. Then there exists a maximum point in $\mathbb PT_{1,0}(M)$ which corresponds to a maximum point $q$ of $\mathscr Y$ in $T_{1,0}(M)-\{0\}$, where $\mathscr Y$ is positive. Let $U$ be a local coordinate chart $p=\pi(q)$ so that with respect to a $(1,0)$ frame $\{e_1,\cdots,e_{m_0}\}$, $\pi^{-1}(U)\cong U\times (\mathbb C^{m_0}-\{0\})$. In the following lemma, we define a product CR structure on $\pi^{-1}(U)$ and derive its properties.

\begin{lem}\label{prod}
There exists a CR structure on $\pi^{-1}(U)$ satisfying: (i) the projection $\pi$ is a CR map; (ii) if $(W^1,W^2,\cdots,W^{m_0})$ are the local fiberwise complex coordinates, then on $\pi^{-1}(U)$,  $\b{\partial}W^i=\partial\ov{W}^i=0$ and $\partial\b{\partial}\overline{W^i}=\b{\partial}\partial W^i=0$.
\end{lem}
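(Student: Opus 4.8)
The plan is to put the product CR structure on the trivialized total space $\pi^{-1}(U)\cong U\times(\mathbb{C}^{m_0}-\{0\})$ and then read off (i) and (ii) directly. Using the given frame $\{e_1,\dots,e_{m_0}\}$ I regard each $e_i$ as a vector field on $\pi^{-1}(U)$ that is independent of the fiber variables, and I let $\partial/\partial W^1,\dots,\partial/\partial W^{m_0}$ denote the holomorphic fiber directions coming from the standard complex structure of $\mathbb{C}^{m_0}-\{0\}$. I then define $T_{1,0}(\pi^{-1}(U))$ to be the span of $e_1,\dots,e_{m_0},\partial/\partial W^1,\dots,\partial/\partial W^{m_0}$. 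This is a complex subbundle of rank $2m_0$ meeting its conjugate only in $\{0\}$, so it is a codimension-one almost CR structure on the $(4m_0+1)$-dimensional manifold $\pi^{-1}(U)$, with complementary line field $F$ spanned by the lift of the Reeb vector field $\xi$.

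First I would verify integrability. The only brackets to examine are $[e_i,e_j]$, $[e_i,\partial/\partial W^j]$ and $[\partial/\partial W^i,\partial/\partial W^j]$. The last two vanish because the base fields and the fiber fields act on disjoint sets of variables, while $[e_i,e_j]\in\Gamma(T_{1,0}M)$ already on $U$, since $(HM,J)$ is a genuine (integrable) CR structure on the pseudo-Hermitian manifold $M$. Hence $T_{1,0}(\pi^{-1}(U))$ is closed under the bracket and defines a CR structure. For (i), the projection $\pi$ annihilates the fiber directions and restricts to the identity on the base fields, so $d\pi$ maps the horizontal distribution of $\pi^{-1}(U)$ onto $HM$ and intertwines the two CR endomorphisms; thus $\pi$ is a CR map.

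For (ii) I fix the splitting $T(\pi^{-1}(U))=H\oplus F$ just described and pass to the dual coframe, whose $(1,0)$-part is $\{\theta^i,dW^i\}$, whose $(0,1)$-part is $\{\bar{\theta}^i,d\overline{W}^i\}$, and whose $F^*$-component is $\theta$. The key observation is that $dW^i$ is purely of type $(1,0)$: it annihilates $T_{0,1}$ and $F$ because $\bar{e}_j(W^i)=0$, $(\partial/\partial\overline{W}^j)(W^i)=0$ and $\xi(W^i)=0$, the coordinate $W^i$ depending only on the holomorphic fiber variables. Consequently $\bar{\partial}W^i=\pi^{0,1}(dW^i)=0$, and conjugating gives $\partial\overline{W}^i=\pi^{1,0}(d\overline{W}^i)=0$.

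The second-order identities then follow formally from $d^2=0$. Indeed $\bar{\partial}\overline{W}^i=\pi^{0,1}(d\overline{W}^i)=d\overline{W}^i$, so $\partial\bar{\partial}\overline{W}^i=\pi^{1,1}(d(d\overline{W}^i))=0$; likewise $\partial W^i=dW^i$ gives $\bar{\partial}\partial W^i=\pi^{1,1}(d(dW^i))=0$. I expect the only genuine work to lie in the integrability step and in the careful identification of form types that makes $dW^i$ purely $(1,0)$; once these are in place the displayed vanishing statements are immediate, since they never require $\partial^2=0$ or $\bar{\partial}^2=0$ (which may fail in the codimension-one setting) but only the exactness of $dW^i$ and $d\overline{W}^i$ together with $d^2=0$.
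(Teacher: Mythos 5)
Your construction is exactly the paper's: the product CR structure on $\pi^{-1}(U)\cong U\times(\mathbb C^{m_0}-\{0\})$ whose $(1,0)$ space is spanned by $T_{1,0}M$ and the holomorphic fiber directions, with $\pi$ a CR map for the obvious reason. Your explicit integrability check is a welcome addition (the paper simply asserts the structure is CR; the check does rely, as you say, on $(HM,J)$ being an honest CR structure, which holds since $M$ is pseudo-Hermitian). Where you genuinely diverge is in part (ii). The paper proves $\partial\bar{\partial}\overline{W}^i=0$ by expanding $\partial\bar{\partial}\overline{W}^i(v_1,v_2)=v_1(\bar{\partial}\overline{W}^i(v_2))-v_2(\bar{\partial}\overline{W}^i(v_1))-\bar{\partial}\overline{W}^i([v_1,v_2]_{0,1})$ and running through the cases where $v_1,v_2$ are base or fiber vectors. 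You instead note that for the chosen splitting $dW^i$ is of pure type $(1,0)$ and $d\overline{W}^i$ of pure type $(0,1)$, so $\partial W^i=dW^i$ and $\bar{\partial}\overline{W}^i=d\overline{W}^i$ as genuine $1$-forms, and the second-order identities fall out of $d^2=0$ before projecting onto types. This is shorter, avoids the bracket bookkeeping, and correctly sidesteps any appeal to $\partial^2=0$ or $\bar{\partial}^2=0$, which need not hold in this setting. Both arguments are sound; yours buys economy, the paper's makes the relevant commutator computations explicit.
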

\begin{proof}
Since $\pi^{-1}(U)\cong U\times (\mathbb C^{m_0}-\{0\})$, the tangent bundle $T\pi^{-1}(U)\cong TU\oplus T(\mathbb C^{m_0}-\{0\})$. Under the isomorphism, define the Levi distribution on $\pi^{-1}(U)$ to be $HM|_U\oplus T(\mathbb C^{m_0}-\{0\})$ and the endomorphism $J_{T_{1,0}}=J\oplus J_{\mathbb C^{m_0}}$ where $J_{\mathbb C^{m_0}}$ is the standard integrable almost complex structure on $\mathbb C^{m_0}$. Obviously $\pi$ is a CR map from $\pi^{-1}(U)$ to $U$. The $(1,0)$ vector space for $J_{T_{1,0}}$ is spanned by $T_{1,0}(M)$ and $\{\frac{\partial}{\partial W^i},1\leq i\leq m_0\}$. As $\b{\partial}W^i|_{HM}=0$ and
$\b{\partial}W^i(\frac{\partial}{\partial \overline{W^j}})=\frac{\partial W^i}{\partial \overline{W^j}}=0$ for any $i,j$, we get $\b{\partial}W^i=0$. By conjugation, $\partial\ov{W}^i=0$

To show $\partial\b{\partial}\overline{W}^i=0$, we have $\partial\b{\partial}\ov{W}^i(v_1,v_2)=v_1(\b{\partial}\ov{W}^i(v_2))-v_2(\b{\partial}\ov{W}^i(v_1))-\b{\partial}\ov{W}^i([v_1,v_2]_{0,1})$ for $(1,0)$ vector $v_1$ and $(0,1)$ vector $v_2$. As $\b{\partial}\ov{W}^i|_{HM}=0$, $\b{\partial}\ov{W}^i(\frac{\partial}{\partial \overline{W_j}})=\delta^i_j$, $[\frac{\partial}{\partial W^i},TM]\subset TM, [\frac{\partial}{\partial \ov{W}^i},TM]\subset TM$, we get $\partial\b{\partial}\ov{W}^i(v_1,v_2)=0$ for $v_1,v_2\in TM$ or $v_1\in TM, v_2=\frac{\partial}{\partial \ov{W}^j}$. Finally, direct calculation gives $\partial\b{\partial}\ov{W}^i(\frac{\partial}{\partial W^j}, \frac{\partial}{\partial \ov{W}^k})=0$. So by linearity, $\partial\b{\partial}\ov{W}^i=0$. Taking conjugation, we get $\b{\partial}\partial W^i=0$
\end{proof}

Using the same notation as in \cite{Yang1} (see also \cite{Mas}), locally denote $$\mathscr F=h_{\a\bar{\beta}}f^{\a}_if^{\bar{\beta}}_{\bar{j}}W^i\overline{W^{j}}, \mathscr H=g_{k\bar{l}}W^{k}\overline{W^{l}}.$$ Equip $\pi^{-1}(U)$ with the product CR structure in Lemma \ref{prod}. We have
\begin{align}
\partial\bar{\partial} \mathscr Y=& -\mathscr Y(\partial\bar{\partial} \log \mathscr H)+\dfrac{\partial\bar{\partial}\mathscr F}{\mathscr H}\notag \\
 &-\dfrac{\partial \mathscr F\wedge \bar{\partial}\log \mathscr H+\partial \log \mathscr H\wedge \bar{\partial}\mathscr F}{\mathscr H}+\dfrac{\mathscr F\partial \log \mathscr H\wedge \bar{\partial}\log \mathscr H}{\mathscr H} \label{Y1}
 \end{align}

At the point $q$, the (1,0) tangent vector space of the above local product CR structure on $\pi^{-1}(U)$ is spanned by $T_{1,0}M(p)$ and $\{\frac{\partial}{\partial W_i},1\leq i\leq m_0\}$. Denote $q=(p,(V^1,V^2,\cdots, V^{m_0}))$.
Define the (1,0) vector $V\in T_q(\pi^{-1}(U))$ at $q$ by $V=V^1e_1+\cdots+V^{m_0}e_{m_0}$. We have the following

\begin{lem}\label{lem2} Let $(e_1, e_2, \cdots, e_{m_0})$ be a normal quasi holomorphic frame at $p$ and $(\tilde{e}_1, \t{e}_2, \cdots, \t{e}_{n_0})$ be a normal quasi holomorphic frame at $f(p)$. Then the following holds:

$$\partial\bar{\partial} \mathscr Y(V,\b{V})(q)\geq \dfrac{\mathscr Y}{\mathscr H}R_{i\b{j}k\b{l}}(p)V^i\b{V}^{j}V^k\b{V}^{l}-\dfrac{1}{\mathscr H}\tilde{R}_{\a\b{\beta}\g\b{\d}}(f(p))f^{\a}_if^{\b{\beta}}_{\b{j}}f^{\g}_{k}f^{\b{\d}}_{\b{l}}
V^i\b{V}^jV^k\b{V}^l,$$
where $R$, $\tilde{R}$ are the curvature components of $M$ and $N$.
\end{lem}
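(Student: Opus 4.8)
The plan is to evaluate the formula \eqref{Y1} for $\partial\bar\partial\mathscr Y$ at the special point $q$, using the maximum condition and the normal-frame machinery to kill all the first-order and connection terms. First I would observe that since $q$ is a critical point of $\mathscr Y$ on the (compact) projectivized bundle, the first-order data vanishes in the relevant directions; concretely $\partial\mathscr Y(q)=\bar\partial\mathscr Y(q)=0$ when tested against $V$, which means the mixed cross terms $\partial\mathscr F\wedge\bar\partial\log\mathscr H$ and $\partial\log\mathscr H\wedge\bar\partial\mathscr F$ in \eqref{Y1}, as well as the last term $\mathscr F\,\partial\log\mathscr H\wedge\bar\partial\log\mathscr H$, all drop out when we contract with $(V,\bar V)$. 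This reduces the inequality to comparing $\partial\bar\partial\mathscr Y(V,\bar V)$ with $-\mathscr Y\,\partial\bar\partial\log\mathscr H(V,\bar V)+\frac{1}{\mathscr H}\partial\bar\partial\mathscr F(V,\bar V)$, so the task becomes identifying each of these two remaining second-order terms with a curvature contraction.

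Next I would compute the two surviving terms separately using the normal frames. For the denominator term, $\mathscr H=g_{k\bar l}W^k\overline{W^l}$, I would use Lemma \ref{prod} (so that $\bar\partial W^i=\partial\overline W^i=0$ and the second $\partial\bar\partial$ of the coordinates vanish) together with Proposition \ref{normal}, which gives $R_{i\bar jk\bar l}(p)=-\bar e_j e_i(g_{k\bar l})$ and $d^Hg_{i\bar j}(p)=\delta_{i\bar j}$. Differentiating $\mathscr H$ and discarding the terms that vanish at $p$ by normality, the surviving $\partial\bar\partial\mathscr H$ involves only the second derivatives of $g_{k\bar l}$, so that $-\mathscr Y\,\partial\bar\partial\log\mathscr H(V,\bar V)(q)$ produces exactly $\frac{\mathscr Y}{\mathscr H}R_{i\bar jk\bar l}(p)V^i\bar V^jV^k\bar V^l$. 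For the numerator term $\mathscr F=h_{\alpha\bar\beta}f^\alpha_i f^{\bar\beta}_{\bar j}W^i\overline{W^j}$, I would apply $\partial\bar\partial$ and, using equation \eqref{part11} together with \eqref{part22} and Proposition \ref{normal} (Christoffel symbols and their first derivatives vanishing at $p$) plus Proposition \ref{Yu} for the target $N$, show that $\frac{1}{\mathscr H}\partial\bar\partial\mathscr F(V,\bar V)$ equals $-\frac{1}{\mathscr H}\tilde R_{\alpha\bar\beta\gamma\bar\delta}(f(p))f^\alpha_i f^{\bar\beta}_{\bar j}f^\gamma_k f^{\bar\delta}_{\bar l}V^i\bar V^jV^k\bar V^l$ modulo a manifestly nonnegative term of the shape $\frac{1}{\mathscr H}h_{\alpha\bar\beta}(\nabla f)^\alpha(\overline{\nabla f})^{\bar\beta}$ (a squared-norm of the second-order jet of $f$) which I then discard to obtain the stated inequality $\geq$.

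The main obstacle, and the place where the computation is genuinely delicate, is the numerator term: I must carefully expand $\partial\bar\partial\mathscr F$ and show that after imposing both normal frames the connection and torsion contributions organize themselves precisely into the target curvature $\tilde R_{\alpha\bar\beta\gamma\bar\delta}$, while the leftover cross terms assemble into a single nonnegative Hermitian square. This requires honest bookkeeping of how $\partial f^\alpha_i$ and $\bar\partial f^\alpha_i$ interact with $h_{\alpha\bar\beta}$, using that $d^Hh_{\alpha\bar\beta}(f(p))=0$ and that $f$ is $(J,J^N)$ holomorphic so the anti-holomorphic derivatives $\bar\partial f^\alpha_i$ are controlled by \eqref{part11}. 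The extra $\xi$-direction of $M$ enters only through \eqref{xi}, which guarantees $\xi(f^\alpha_i)=0$, so the Reeb direction contributes nothing and the analysis closely parallels the purely Hermitian case of Yang and Masood; once the nonnegative second-jet term is dropped, the inequality follows.
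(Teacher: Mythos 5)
Your overall route coincides with the paper's: evaluate (\ref{Y1}) at $q$ in the two normal quasi holomorphic frames, identify $\partial\bar{\partial}\log\mathscr H$ with the curvature of $M$ via Proposition \ref{normal}, reduce $\partial\bar{\partial}\mathscr F$ to $\partial\bar{\partial}h_{\a\b{\beta}}$ plus a nonnegative Hermitian square $h_{\a\b{\beta}}\,\partial(f^\a_iW^i)\wedge\bar{\partial}(f^{\b{\beta}}_{\b{j}}\overline{W^j})(V,\b V)$ using (\ref{part11}), (\ref{part22}), (\ref{xi}) and Propositions \ref{normal}, \ref{Yu}, and then discard that square. That part of your plan is sound and matches the paper.

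The genuine gap is in your first step. Criticality of $\mathscr Y$ at $q$ does \emph{not} make the three first-order terms in the second line of (\ref{Y1}) drop out individually. All that $\partial\mathscr Y(V)=0$ gives is the relation $\partial\mathscr F(V)=\mathscr F\,\partial\log\mathscr H(V)$; substituting this, the three terms combine to $-\mathscr Y\,|\partial\log\mathscr H(V)|^2$, which is nonpositive (and strictly negative unless $\partial\log\mathscr H(V)=0$, since $\mathscr Y(q)>0$). A leftover of that sign is exactly what you cannot afford when proving a lower bound, so the argument as written does not close. The correct reason these terms vanish --- and the one the paper uses --- is that $\partial\mathscr H(V)=\bar{\partial}\mathscr H(\b V)=0$ at $q$ outright: $V$ has no vertical component, so $\partial W^k(V)=0$ by the product structure of Lemma \ref{prod}, and $e_i(g_{k\b{l}})(p)=0$ by normality of the frame (Proposition \ref{3.3}(ii), which reads $g_{i\b j}(p)=\d_{i\b j}$ \emph{and} $d^Hg_{i\b j}(p)=0$, not the conflated condition you wrote); hence $\partial\mathscr H(V)=e_i(g_{k\b{l}})V^iW^k\overline{W^l}+g_{k\b{l}}\,\partial W^k(V)\,\overline{W^l}=0$. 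With this substitution your proof goes through, and as a bonus the lemma is seen to hold at any point equipped with such frames --- the maximality of $q$ is only needed afterwards, when the maximum principle is applied to conclude the theorem.
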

\begin{proof}
We evaluate equation (\ref{Y1}) on $(V,\b{V})$ at $q$. As the projection $\pi$ satisfies the conditions in Lemma \ref{exter}, we can actually evaluate each term on $M$ and $N$. By Proposition \ref{normal} at $p$, $\partial g_{i\b{j}}=\b{\partial}g_{i\b{j}}=0$.
As the vertical component of $V$ is zero, $\partial W^i(V)=\b{\partial}\ov{W}^i(\b{V})=\partial \mathscr H(V)=\b{\partial}\b{\mathscr H}(\b{V})=0$. So the terms in the second line of the equation (\ref{Y1}) vanish since they would involve $\partial \mathscr H, \b{\partial}\b{\mathscr H}$. For the first term of equation (\ref{Y1}), by Lemma \ref{exter} and Proposition \ref{normal},
$$\partial\bar{\partial} \log \mathscr H(V,\b{V})=\dfrac{\partial\bar{\partial} g_{i\b{j}}(e_k,\b{e}_l)V^i\b{V}^jV^k\b{V}^l}{\mathscr H}=-\dfrac{R_{i\b{j}k\b{l}}V^i\b{V}^jV^k\b{V}^l}{\mathscr H}.$$
To compute $\dfrac{\partial\bar{\partial}\mathscr F}{\mathscr H}(V,\b{V})$,
by Proposition \ref{normal} and Proposition \ref{Yu}, at $p$, $\Gamma_{\b{i}j}^k=\partial \Gamma_{\b{i}j}^k=0$; at $f(p)$, $\t{\Gamma}_{\b{\a}\beta}^\g=\partial{\t{\Gamma}}_{\b{\a}\beta}^\g=0$. So at $p$, equations (\ref{part11}),(\ref{part22}) give $\b{\p}f_i^{\a}=0, \p\b{\p}f_i^{\a}=0$. Next, we compute $\b{\p}\p f_i^{\a}$. For any $0\leq r,s\leq m_0$, we have
\begin{align*}
\p\b{\p}f_i^{\a}(e_r,\b{e}_s)+\b{\p}\p f_i^{\a}(e_r,\b{e}_s)&=e_r\b{e}_s(f_i^{\a})-[e_r,\b{e}_s]_{0,1}(f_i^{\a})-\b{e}_se_r(f_i^{\a})-[e_r,\b{e}_s]_{1,0}(f_i^{\a})\\
&=g([e_r,\b{e}_s],\xi)\xi(f_i^{\a})=0,
\end{align*}
where the last equality follows from (\ref{xi}). Therefore, at $p$, $\b{\p}\p f_i^{\a}=-\p\b{\p}f_i^{\a}=0$.
Then we have
\begin{align*}
\partial\bar{\partial}\mathscr F(V,\b{V})=&f^{\a}_iV^if^{\b{\beta}}_{\b{j}}\b{V}^j\p\b{\p}h_{\a\b{\beta}}(df(d\pi(V)),df( d\pi(\b{V})))+h_{\a\b{\beta}}(\p(f^{\a}_iW^i)\wedge \b{\p}(f^{\b{\beta}}_{\b{j}}\ov{W}^j))(V,\b{V})\\
\geq&f^{\a}_iV^if^{\b{\beta}}_{\b{j}}\b{V}^j\p\b{\p}h_{\a\b{\beta}}(df(d\pi(V)),df( d\pi(\b{V})))= -\tilde{R}_{\a\b{\beta}\g\b{\d}}f^{\a}_if^{\b{\beta}}_{\b{j}}f^{\g}_{k}f^{\b{\d}}_{\b{l}}
V^i\b{V}^jV^k\b{V}^l.
\end{align*}
The lemma is proved.
\end{proof}

Now we conclude the proof of Theorem 1.1. At the maximum point $q\in T_{1,0}(M)-\{0\}$, by Lemma 3.8, $\partial\bar{\partial} \mathscr Y(q)(V,\b{V})\leq 0$. As the pseudo-Hermitian holomorphic sectional curvature of $M$ is nonnegative (resp. positive) and the holomorphic sectional curvature of $N$ is negative (resp. nonpositive), by Lemma \ref{lem2}, $\partial\bar{\partial} \mathscr Y(q)(V,\b{V})>0$, which is a contradiction. Therefore, $\mathscr Y\equiv 0$ which gives that $df=0$. So $f$ is constant.

\subsection{Proof of Theorem 1.2} Let $(N,J^N,h)$ be a compact $2n_0$-dimensional almost Hermitian manifold and $(M, HM, J, \theta)$ be a $2m_0+1$-dimensional Sasakian manifold. Assume that  $f:N\lra M$ is a $(J^N,J)$ holomorphic map. By Definition \ref{def1}, $df_H(T_{1,0}N)\subset T_{1,0}M$, where $df_H=\pi_H\circ df$. Assume that $\{\t{e}_\a, 1\leq \a\leq n_0\}$ is any local $(1,0)$ frame on $N$ and $\{e_i, 1\leq i\leq m_0\}$ is any local $(1,0)$ frame on $M$. Then $df(\t{e}_{\a})=f_\a^i e_i+f_\a^0\xi$ and $df_H(\t{e}_{\a})=f_\a^i e_i$. Let $\t{\theta}^\a$ and $\theta^i$ be the coframes dual to $\{\t{e}_\a\}$ and $\{e_i\}$. Then
\begin{align} \label{equa7}
f^*(\theta^i)=f^i_\a \t{\theta}^\a. \end{align}
As $M$ is Sasakian, the first structure equation of $M$ is:
\begin{align}\label{equa10}
d\theta^i=\theta^j\wedge \theta_j^i.
\end{align}
where $\theta_j^i=\Gamma_{kj}^i\theta^k+\Gamma_{\b{k}j}^i\b{\theta}^k$. Taking exterior differential to (\ref{equa7}) and comparing the $(1,1)$ form components, we have \begin{align} \label{equa12}
\b{\p}f_\a^i=f_\beta^i\t{\Gamma}_{\b{\gamma}\a}^\beta\b{\t{\theta}}^\gamma-f_\a^j\Gamma_{\b{k}j}^if^{\b{k}}_{\b{\beta}}\b{\t{\theta}}^\beta.\end{align}
View $N$ as a codimension zero almost CR manifold. As $f$ is $(J^N,J)$ holomorphic, applying Lemma \ref{exter} we get, \begin{align}\p\b{\p}f_\a^{i}=\t{\Gamma}_{\b{\gamma}\a}^\beta\p(f_\beta^i\b{\t{\theta}}^\gamma)
+\p\t{\Gamma}_{\b{\gamma}\a}^\beta f_\beta^i\b{\t{\theta}}^\gamma-
\Gamma_{\b{k}j}^i\p(f_\a^jf^{\b{k}}_{\b{\beta}}\b{\t{\theta}}^\beta)-f^*(\p\Gamma_{\b{k}j}^i)f_\a^j
f^{\b{k}}_{\b{\beta}}\b{\t{\theta}}^\beta.\label{equa13} \end{align}

Denote $T_{1,0}(N)-\{0\}$ the space of nonzero $(1,0)$ vectors over $N$ which forms a $\mathbb C^{n_0}-\{0\}$ fiber bundle over $N$ with $\pi:T_{1,0}(N)-\{0\}\lra N$ being the projection. Let $W=W^\a\t{e}_{\a}$ be a nonzero $(1,0)$ vector of $N$. Define the density function $\mathscr Y: T_{1,0}(N)-\{0\}\lra \mathbb R$ by $$\mathscr Y(W)=\dfrac{g(df_H(W),df_H(\bar{W}))}{h(W,\bar{W})}=\dfrac{g_{i\bar{j}}f^{i}_{\a}f^{\bar{j}}_{\bar{\beta}}W^{\a}\overline{W^{\beta}}}{h_{\g\bar{\d}}W^{\g}\overline{W^{\d}}}.$$
By the definition, $\mathscr Y\equiv 0$ if and only if $df_H=0$, which means $f$ is horizontally constant. If $\mathscr Y$ is not identically zero. As $N$ is compact, by the homogeneity of $\mathscr Y$, there must be a maximum point $q\in T_{1,0}(N)-\{0\}$ such that $\mathscr Y(q)=\max \mathscr Y>0$. Let $p=\pi(q)\in N$. Then there exists a local coordinate chart $U\subset N$ of $p$ so that $\pi^{-1}(U)\cong U\times (\mathbb C^{n_0}-\{0\})$. The following lemma is similar to Lemma \ref{prod}.
\begin{lem}\label{prod2}
There exists an almost structure on $\pi^{-1}(U)$ satisfying: (i) the projection $\pi$ is a pseudoholomorphic map; (ii) if $(W^1,W^2,\cdots,W^{n_0})$ are the local fiberwise complex coordinates, then on $\pi^{-1}(U)$,  $\b{\partial}W^\a=\partial\ov{W}^\a=0$ and $\partial\b{\partial}\overline{W^\a}=\b{\partial}\partial W^\a=0$.
\end{lem}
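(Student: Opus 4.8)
The plan is to copy the construction of Lemma \ref{prod}, simplified by the fact that the base $N$ now carries a codimension-zero almost CR structure (an almost complex structure), so there is no Reeb direction to accommodate and the resulting structure on $\pi^{-1}(U)$ is itself an almost complex structure. Using the product identification $\pi^{-1}(U)\cong U\times(\mathbb{C}^{n_0}-\{0\})$, I would write $T\pi^{-1}(U)\cong TU\oplus T(\mathbb{C}^{n_0}-\{0\})$ and define the product almost complex structure $J_{T_{1,0}}=J^N\oplus J_{\mathbb{C}^{n_0}}$, where $J_{\mathbb{C}^{n_0}}$ is the standard integrable structure on the fiber. Its $(1,0)$-eigenbundle is spanned by $T_{1,0}(N)$ together with the vertical fields $\{\partial/\partial W^\alpha\}$. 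Since $N$ is only almost Hermitian, $J_{T_{1,0}}$ need not be integrable, so throughout one works with the operators $\partial,\bar\partial$ attached to this splitting rather than with any $\bar\partial^2=0$ identity.

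Part (i) is immediate: $d\pi$ is the projection onto $TU$ and intertwines $J^N\oplus J_{\mathbb{C}^{n_0}}$ with $J^N$, so $\pi$ is pseudoholomorphic. For the first-order statements in (ii), I would note that $W^\alpha$ is a fiber coordinate, hence constant along the base, so $dW^\alpha$ vanishes on $TN$, while $dW^\alpha(\partial/\partial\overline{W^\beta})=0$; therefore $\bar\partial W^\alpha=\pi^{0,1}(dW^\alpha)$ annihilates the entire $(0,1)$-bundle and vanishes, and conjugation gives $\partial\overline{W^\alpha}=0$. For the second-order statements I would evaluate the invariant expression $\partial\bar\partial\overline{W^\alpha}(v_1,v_2)=v_1(\bar\partial\overline{W^\alpha}(v_2))-v_2(\bar\partial\overline{W^\alpha}(v_1))-\bar\partial\overline{W^\alpha}([v_1,v_2]_{0,1})$ for a $(1,0)$-field $v_1$ and a $(0,1)$-field $v_2$, using $\bar\partial\overline{W^\alpha}|_{TN}=0$, $\bar\partial\overline{W^\alpha}(\partial/\partial\overline{W^\beta})=\delta^\alpha_\beta$, and the product bracket relations $[\partial/\partial W^\alpha,TN]\subset TN$ and $[\partial/\partial\overline{W}^\alpha,TN]\subset TN$. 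Running through the cases $v_1,v_2\in TN$; $v_1\in TN,\ v_2=\partial/\partial\overline{W}^\beta$; and the purely vertical case each forces the expression to vanish, the last reducing to the standard flat identity on $\mathbb{C}^{n_0}$. By linearity $\partial\bar\partial\overline{W^\alpha}=0$, and conjugation yields $\bar\partial\partial W^\alpha=0$.

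There is no genuine obstacle here; the computation is strictly easier than in Lemma \ref{prod} because the fiber factor is flat and no line bundle needs to be adjoined. The only point worth flagging is one of bookkeeping: since $J_{T_{1,0}}$ is generally non-integrable, one must verify each identity directly from the definitions of $\partial,\bar\partial$ rather than invoking $\bar\partial^2=0$, and in particular the vanishing of the bracket term $\bar\partial\overline{W^\alpha}([v_1,v_2]_{0,1})$ must be deduced purely from the fact that base and fiber directions commute in the product, which holds regardless of the integrability of $J^N$.
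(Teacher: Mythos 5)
Your proposal matches the paper's proof: the paper defines the product almost complex structure $J_{T_{1,0}}=J^N\oplus J_{\mathbb C^{n_0}}$ on $T\pi^{-1}(U)\cong TU\oplus T(\mathbb C^{n_0}-\{0\})$, observes that $\pi$ is pseudoholomorphic, and refers the verification of (ii) to the identical computation in Lemma \ref{prod}, which is exactly the case analysis you carry out. Your remark that one must argue directly from the definitions of $\partial,\bar\partial$ rather than from $\bar\partial^2=0$ is a correct and worthwhile clarification, but it does not change the route.
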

\begin{proof} As the tangent bundle $T\pi^{-1}(U)\cong TU\oplus T(\mathbb C^{n_0}-\{0\})$, we define the almost complex structure on $\pi^{-1}(U)$ by $J_{T_{1,0}}=J^N\oplus J_{\mathbb C^{n_0}}$. Then $\pi$ is pseudoholomorphic. The proof of property (ii) is the same with that in  Lemma \ref{prod}.
\end{proof}

The (1,0) tangent vector space of the above almost structure on $\pi^{-1}(U)$ is spanned by $T_{1,0}N$ and $\{\frac{\partial}{\partial W^\a},1\leq \a\leq n_0\}$. (The technique and Lemma \ref{prod2} may be needed in the arguments in \cite{Mas}).

Now we can prove Theorem 1.2. Assume $q=(p,(V^1,V^2,\cdots, V^{m_0}))$ in local coordinates. Define a (1,0) vector $V\in T_q(\pi^{-1}(U))$ by $V=V^1\t{e}_1+\cdots+V^{n_0}\t{e}_{n_0}$. Applying the maximum principle in Lemma 3.8 where $F=0$, we have $\partial\bar{\partial} \mathscr Y(q)(V,\b{V})\leq 0$.

On the other hand, denote $\mathscr F=g_{i\bar{j}}f^{i}_{\a}f^{\bar{j}}_{\bar{\beta}}W^{\a}\overline{W^{\beta}}, \mathscr H=h_{\g\bar{\d}}W^{\g}\overline{W^{\d}}.$ Equip $\pi^{-1}(U)$ with the product CR structure in Lemma \ref{prod}. We get\begin{align} \label{total}
\partial\bar{\partial} \mathscr Y=-\mathscr Y(\partial\bar{\partial} \log \mathscr H)+\frac{\partial\bar{\partial}\mathscr F}{\mathscr H}
 -\frac{\partial \mathscr F\wedge \bar{\partial}\log \mathscr H+\partial \log \mathscr H\wedge \bar{\partial}\mathscr F}{\mathscr H}+\frac{\mathscr F\partial \log \mathscr H\wedge \bar{\partial}\log \mathscr H}{\mathscr H}.\end{align}
Then we choose $\{\t{e}_\a\}$ and $\{e_i\}$ to be local normal quasi holomorphic frames at $p$ and $f(p)$ respectively. Evaluating to $(V,\b{V})$, the last two terms vanish as before. For the first  term, by Lemma \ref{prod2} and Proposition \ref{Yu},
\begin{align}\label{first}
\partial\bar{\partial} \log \mathscr H(V,\b{V})=-\dfrac{\t{R}_{\a\b{\beta}\gamma\b{\d}}V^\a\b{V}^\beta V^\g\b{V}^\d}{\mathscr H},\end{align}
where $\t{R}$ denotes the curvature tensor of the $N$. For the second term, by equations (\ref{equa12}), (\ref{equa13}), at $p$, $\b{\p}f_\a^i=0, \p\b{\p}f_\a^{i}=0$. Also, on an almost complex manifold, $\b{\p}\p f_\a^{i}=-\p\b{\p}f_\a^{i}=0$. Then the same calculation as in Theorem 1.1 gives that \begin{align}\label{second}
\partial\bar{\partial}\mathscr F(V,\b{V})\geq -R_{i\b{j}k\b{l}}f^{i}_\a f^{\b{j}}_{\b{\beta}}f^{k}_{\g}f^{\b{l}}_{\b{\d}}
V^\a\b{V}^\beta V^\g\b{V}^\d.
\end{align}
Combing (\ref{total}), (\ref{first}) and (\ref{second}), as the holomorphic sectional curvature of $N$ is nonnegative (resp. positive) and the pseud-Hermitian sectional curvature of $M$ is negative (resp. nonpositive), we get $\partial\bar{\partial} \mathscr Y(q)(V,\b{V})>0$ which is a contradiction. Therefore, $\mathscr Y\equiv 0$ and $f$ is horizontally constant.

\subsection{Proof of Theorem 1.3} Let $(M, HM, J, \theta)$ be a $2m_0+1$-dimensional pseudo-Hermitian manifold and $(N,HN, J^N, \theta^N)$ be a $2n_0+1$-dimensional Sasakian manifold. Let $f:M\lra N$ be a transversally holomorphic map. By Definition \ref{def2}, this means $df(L)\subset df(L^N)$ and $df_{M,N}(T_{1,0}M)\subset T_{1,0}N$, where $df_{M,N}=\pi_N\circ df\circ i_{HM}$.  Assume that $\{e_i, 1\leq i\leq m_0\}$ is any local $(1,0)$ frame on $M$ around $p\in M$, $\{\t{e}_\a, 1\leq \a\leq n_0\}$ is any local $(1,0)$ frame around $f(p)\in N$ and $\xi, \t{\xi}$ are the Reeb vector fields on $M$, $N$. Then $df(e_i)=f_i^\a \t{e}_\a+f_i^0\t{\xi}$ and $df(\xi)=f_0^0\t{\xi}$. So $df_{M,N}(e_i)=f_i^\a \t{e}_\a$. Let $\theta^i$ and $\t{\theta}^\a$ be the coframes dual to $\{e_i\}$ and $\{\t{e}_\a\}$ on $M,N$. Then we have
\begin{align} \label{equa17}f^*(\t{\theta}^\a)=f^\a_i\theta^i. \end{align}
The first structure equation of $M$ gives that
\begin{align}
d\theta^i&=\theta^j\wedge \theta_j^i+\theta\wedge A^i_{\b{j}}\b{\theta}^j,
\end{align} where $\theta_j^i=\Gamma_{kj}^i\theta^k+\Gamma_{\b{k}j}^i\b{\theta}^k$. Since $N$ is Sasakian, the first structure equation of $N$ gives
\begin{align}
d\t{\theta}^\a&=\t{\theta}^\beta\wedge \t{\theta}_\beta^\a,
\end{align} where $\t{\theta}^\a_\beta=\t{\Gamma}_{\gamma\beta}^\alpha\t{\theta}^\gamma+
\t{\Gamma}_{\b{\gamma}\beta}^\alpha\b{\t{\theta}}^\gamma.$  Taking exterior differential to (\ref{equa17}) to get
\begin{align} \label{df33}
df^\a_i\wedge \theta^i+f^\a_i(\theta^j\wedge \theta_j^i+\theta\wedge A^i_{\b{j}}\b{\theta}^j)=f^*(\t{\theta}^\beta\wedge \t{\theta}_\beta^\a)=f^\beta_j\theta^j\wedge f^*( \t{\theta}_\beta^\a).
\end{align}
Comparing the $(1,1)$ form in (\ref{df33}), we have \begin{align} \label{part1}
\b{\p} f^\a_i=f^{\a}_j\Gamma_{\b{k}i}^j\b{\theta}^k- f^{\beta}_i\t{\Gamma}^\a_{\b{\gamma}\beta}
f^{\b{\g}}_{\b{j}}\b{\theta}^j.\end{align}
Comparing the components of $\theta\wedge \theta^i$ in (\ref{df33}), we also get
\begin{align} \label{part111} \xi(f^\a_i)=0
\end{align}
As $\pi$ satisfies the conditions of Lemma \ref{exter} and $f$ is transversally holomorphic, from (\ref{part1}) we have \begin{align}\label{part2}
\p\b{\p}  f^\a_i=\p\Gamma_{\b{k}i}^jf^{\a}_j\b{\theta}^k+\Gamma_{\b{k}i}^j\p(f^{\a}_j\b{\theta}^k)- f^*(\p\t{\Gamma}^\a_{\b{\gamma}\beta})f^{\beta}_if^{\b{\g}}_{\b{j}}\b{\theta}^j-
\t{\Gamma}^\a_{\b{\gamma}\beta}\p(f^{\beta}_if^{\b{\g}}_{\b{j}}\b{\theta}^j).
\end{align}
Let $W=W^ie_i\in T_{1,0}M$. Assume the Webster metric on $N$ is $h$. Define the density function $\mathscr Y: T_{1,0}(M)-\{0\}\lra \mathbb R$ by $$\mathscr Y(W)=\dfrac{h(df_{M,N}(W),df_{M,N}(\bar{W}))}{g(W,\bar{W})}=\dfrac{h_{\a\bar{\beta}}f^{\a}_if^{\bar{\beta}}_{\bar{j}}W^i\overline{W^{j}}}{g_{k\bar{l}}W^{k}\overline{W^{l}}}.$$
By the definition, $\mathscr Y\equiv 0$ if and only if $df_{M,N}=0$, which means $f$ is horizontally constant. We show that $Y\equiv 0$ under the conditions of Theorem 1.3. As before, if $\mathscr Y$ is not identically zero, since $M$ is compact, there must be a maximum point $q\in T_{1,0}(M)-\{0\}$ such that $\mathscr Y(q)=\max \mathscr Y>0$. Let  $U$ be a neighborhood of $q$ so that $\pi^{-1}(U)\cong U\times (\mathbb C^{m_0}-\{0\})$ and $q=(p,(V^1,V^2,\cdots, V^{m_0}))$ in this coordinates. Equip $\pi^{-1}(U)$ with the CR structure in Lemma \ref{prod}. Define a (1,0) vector $V\in T_q(\pi^{-1}(U))$ by $V=V^1\t{e}_1+\cdots+V^{n_0}\t{e}_{n_0}$. Applying the maximum principle in Lemma 3.8, we have $\partial\bar{\partial} \mathscr Y(q)(V,\b{V})\leq 0$.

Next, let $\{e_i\}$ and $\{\t{e}_\a\}$ be local normal quasi pseudoholomorphic frames around $p$ and $f(p)$. Combing with (\ref{part1}), (\ref{part111}), (\ref{part2}), the same argument as in Theorem 1.1 gives that $$\partial\bar{\partial} \mathscr Y(q)(V,\b{V})\geq \dfrac{\mathscr Y}{\mathscr H}R_{i\b{j}k\b{l}}V^i\b{V}^{j}V^k\b{V}^{l}-\dfrac{1}{\mathscr H}\tilde{R}_{\a\b{\beta}\g\b{\d}}f^{\a}_if^{\b{\beta}}_{\b{j}}f^{\g}_{k}f^{\b{\d}}_{\b{l}}
V^i\b{V}^jV^k\b{V}^l.$$ By the curvature conditions of $M$ and $N$, we have $\partial\bar{\partial} \mathscr Y(q)(V,\b{V})>0$ which is a contradiction. Therefore, $\mathscr Y\equiv 0$ and $f$ is horizontally constant.

\section{Almost CR structures on complex vector bundles over an almost CR manifold}

In this section, we construct global almost CR structures on a complex vector bundles over an almost CR manifold. Assume that $M$ is a $m$-dimensional manifold with a codimension $m-2m_0$ almost CR structure $(HM,J)$. We derive

\begin{prop}\label{5.1} Let $p: E\lra M$ be a $k$-dimensional complex vector bundle over an almost CR manifold $(M, HM, J)$. Then associated to any linear connection on $E$, there is a codimension $m-2m_0$ almost CR structure $(HE, J_E)$ on $E$ such that the map $p$ is an almost CR map.
\end{prop}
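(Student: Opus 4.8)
The plan is to build the almost CR structure on $E$ by splitting the tangent bundle of $E$ into horizontal and vertical parts using the chosen linear connection, declaring the Levi distribution $HE$ to consist of the horizontal lift of $HM$ together with the full vertical bundle, and transporting $J$ and the fiberwise complex multiplication onto these two summands respectively. First I would recall that a linear connection $\nabla$ on $E$ determines a connection map (or equivalently a horizontal distribution) yielding a splitting $T_eE = \mathcal H_e \oplus \mathcal V_e$ at each point $e\in E$, where the vertical bundle $\mathcal V_e = \ker(dp_e)$ is canonically identified with the fiber $E_{p(e)}$ and the horizontal bundle $\mathcal H_e$ is mapped isomorphically onto $T_{p(e)}M$ by $dp_e$. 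The horizontal lift gives a linear isomorphism $h_e\colon T_{p(e)}M \to \mathcal H_e$ with $dp_e\circ h_e = \mathrm{id}$.

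Next I would define the subbundle $HE\subset TE$ fiberwise by $HE_e = h_e(HM_{p(e)})\oplus \mathcal V_e$; since $h_e(HM)$ has real rank $2m_0$ and the fiber $E_{p(e)}\cong \mathbb C^k$ contributes real rank $2k$, the complementary subbundle has constant rank $m-2m_0$, matching the required codimension. On $HE$ I would define the endomorphism $J_E$ by setting $J_E = h_e\circ J\circ dp_e$ on the horizontal part $h_e(HM)$ (equivalently, $J_E$ acts on a horizontal lift as the lift of $J$ applied downstairs) and $J_E$ equal to multiplication by $\sqrt{-1}$ in the fiber on the vertical part $\mathcal V_e\cong E_{p(e)}$. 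Because $J^2=-\mathrm{id}$ on $HM$ and fiberwise multiplication by $\sqrt{-1}$ squares to $-\mathrm{id}$ on each $\mathcal V_e$, the operator $J_E$ satisfies $J_E^2 = -\mathrm{id}$ on all of $HE$, so $(HE,J_E)$ is an almost CR structure of the claimed codimension. Finally, the almost CR map condition for $p$ is immediate from the construction: $dp_e$ kills the vertical part and maps $h_e(HM_{p(e)})$ isomorphically onto $HM_{p(e)}$, so $dp(HE)\subset HM$; and for a horizontal lift $X = h_e(Y)$ with $Y\in HM$ one has $dp(J_E X) = dp(h_e(JY)) = JY = J\,dp(X)$, while on the vertical part both sides vanish, so $dp\circ J_E = J\circ dp$ on $HE$.

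The main point requiring care—though it is not a deep obstacle—is verifying that $HE$ and $J_E$ assemble into genuinely \emph{smooth} subbundles and bundle maps, rather than merely pointwise objects. For this I would work in a local trivialization $E|_U\cong U\times \mathbb C^k$ over a coordinate chart $U\subset M$ adapted to the splitting $TM = HM\oplus F$ (as in the splittings used throughout Section~3), where $\nabla$ is represented by a smooth connection one-form; the horizontal lift $h_e$, the vertical identification, and hence $J_E$ then have explicit smooth expressions in the fiber coordinates. The key structural observation is that neither $HE$ nor $J_E$ depends on any choice of complement $F$ to $HM$ in $TM$: the vertical bundle $\mathcal V$ is canonical, the horizontal distribution $\mathcal H$ is determined by $\nabla$ alone, and $JY$ is defined intrinsically on $HM$, so the construction is well defined and globally consistent across overlapping trivializations. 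I would close by remarking that integrability is \emph{not} asserted here—the resulting structure is only almost CR in general—which is consistent with the discussion following the statement, where the question of when $(HE,J_E)$ is integrable is raised separately.
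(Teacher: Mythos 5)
Your proposal is correct and follows essentially the same route as the paper: split $TE$ into the connection's horizontal distribution plus the vertical bundle, set $HE$ equal to the horizontal lift of $HM$ plus all of $VE$, and transport $J$ and the fiberwise complex multiplication onto the two summands. The only cosmetic difference is that the paper manufactures the horizontal distribution by passing through the associated principal $GL(2k,\mathbb R)$-bundle, whereas you invoke the connection map directly and then check smoothness in a local trivialization (which the paper also does, via the explicit local formulas with the connection forms $\om_j^p$ given after the proof); these are equivalent formulations of the same construction.
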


\begin{proof}
As a complex vector bundle, $E$ can be also viewed as a $2k$-dimensional real vector bundle with a bundle map $I:E\lra E$ such that $I^2=-id$. Denote the tangent bundle of $E$ by $TE$. The vertical bundle $VE$ is the kernel of $dp$ in $TE$. $VE$ is canonically isomorphic to the pull back bundle $p^* E$ on $E$. Therefore, the bundle map $I$ pull back to an endomorphism $J_I:VE\lra VE$ such that $J_I^2=-id$.

Next, recall that a (real) linear connection on $E$ is a map $\nabla^E: \Gamma(M,E)\lra \Gamma(M, T^*M\otimes E)$ such that $\nabla^E (fs)=f\nabla^E s+df\otimes s$ where $s$ is smooth section of $E$ and $f$ is a smooth function on $M$. The existence of linear connections is well known (see \cite{KN}). Fix a linear connection $\nabla^E$. Denote $P(E)$ the principal $GL(2k, \mathbb R)$ bundle determined by $E$ and $\pi: P(E)\lra M$ the projection. Let $\tilde{\pi}$ be the induced projection $\tilde\pi: P(E)\times \mathbb R^{2k}\lra M$ where the map on the second factor is trivial. There is a natural quotient map $q: P(E)\times \mathbb R^{2k}\lra E$ such that $p\circ q=\tilde\pi$. By the theory of connections on principal bundles (\cite{KN}), $\nabla_E$ determines a connection one-form on $P(E)$, which corresponds to a $GL(2k, \mathbb R)$-equivariant horizontal distribution $\t{H}P$ in $TP(E)$. Here a horizontal distribution means a $m$-dimensional distribution such that $d\pi_w|_{(\t{H}P)_w}: (\t{H}P)_w\lra T_{\pi(x)}M$ is an isomorphism for each $w\in P(E)$. The distribution $\t{H}P$ maps into $P(E)\times \mathbb R^{2k}$ as $\t{H}P\times 0$ which passes to a distribution $\t{H}E=dq(\t{H}P\times 0)$ over $E$. As $d\tilde\pi=dp\circ dq$, at each point $v\in E$, the projection $dp|_{\t{H}E}: (\t{H}E)_v\lra T_{p(v)}M$ is an isomorphism. Therefore, $\t{H}E$ is a horizontal distribution of $E$. As $dp(VE)=0$, we have $(VE)_v\cap (\t{H}E)_v=0$ at each $v\in E$. So $TE=\t{H}E\oplus VE$.

With the horizontal distribution $\t{H}E$, we pull back $HM$ through $dp$ and define a Levi distribution $HE$ of $E$. Indeed, denote $dp_{\t{H}E}$ the restriction of $dp$ to $HrE$, which is an isomorphism at each fiber $(\t{H}E)_v$. Let $HE=dp_{\t{H}E}^{-1}(HM)\oplus VE$. It is a $2m_0+2k$ subbundle of $TE$. Then define $J_E: HE\lra HE$ by $$J_E|_{dp_{\t{H}E}^{-1}(HM)}=dp_{\t{H}E}^{-1}\circ J\circ dp_{\t{H}E};\ \ \ \  J_E|_{VE}=J_I.$$
Then $J_L^2=-id$. So $(HE, J_E)$ forms a codimension $(m-2k)$ almost CR structure on $E$. As $dp(HE)=HM$, $dp\circ J_E=J\circ dp$, the projection $p$ is an almost CR map.
\end{proof}

We can describe $\t{H}E$ and $VE$ explicitly using local frames as follows. Assume that $U$ is an open chart of $M$ with coordinate $(x^1, \cdots, x^m)$ and $\{s_j\}, 1\leq j\leq 2k$ is a local frame of $E$ on $U$. With respect to $\{s_j\}$, $p^{-1}(U)\cong U\times \mathbb R^{2k}$ with coordinates $(x^1, x^2, \cdots, x^m, y^1, \cdots, y^{2k})$. Then $\{\f{\p}{\p x^i}, \f{\p}{\p y^j}, 1\leq i\leq m, 1\leq j\leq 2k \}$ forms a local frame of $TE$. Assume that $\nabla s_j=\om_j^p s_p$, where $\om_j^p$ are real 1-forms on $M$. Then at $v=(a^1, \cdots, a^m, b^1,\cdots, b^{2k})$, we can express $(VE)_v$ and $(\t{H}E)_v$ as:
\begin{align*}
(VE)_v=span\{\f{\p}{\p y^j}\}, 1\leq j\leq 2k,\\
(\t{H}E)_v=span\{\f{\p}{\p x^i}-b^{j}\om_j^p(\f{\p}{\p x^i})\f{\p}{\p y^p}\}, 1\leq i\leq m.
 \end{align*}
 By the expressions, $\t{H}E\cong p^*TM, VE\cong p^*E$.

To describe the local expression of the almost CR structure $(HE,J_E)$ on $E$, assume that $\{v_i, 1\leq i\leq 2m_0\}$ is a local frame of $HM$. Then at $v=(a^1, \cdots, a^m, b^1,\cdots, b^{2k})$, $dp_{\t{H}E}^{-1}(v_i)=v_i-b^{j}\om_j^p(v_i)\f{\p}{\p y^p}$. Here we identify $HM$ to be in the first factor of $TE\cong TM\times T\mathbb R^{2k}$. For the complex structure $I$ on $E$, let $I(s_j)=I_j^l s_l$. Then the Levi distribution $HE$ is spanned by $\{dp_{\t{H}E}^{-1}(v_i), \f{\p}{\p y^j}, 1\leq i\leq 2m_0, 1\leq j\leq 2k\}$. The bundle map is given by $$J_E(dp_{\t{H}E}^{-1}(v_i))=dp_{\t{H}E}^{-1}(J(v_i)),\ \ \ \  J_E(\f{\p}{\p y^j})=I_j^l\f{\p}{\p y^l}.$$
 A basis of $T_{1,0}M$ is given by $\{e_i=\dfrac{1}{2}(v_i-\sqrt{-1}J(v_i))\}$. Denote $$\hat{e}_i=\dfrac{1}{2}(dp_{\t{H}E}^{-1}(v_i)-\sqrt{-1}J_E(dp_{\t{H}E}^{-1}(v_i))=dp_{\t{H}E}^{-1}e_i,\ \ \ \ f_j=\dfrac{1}{2}(\f{\p}{\p y^j}-\sqrt{-1}I(\f{\p}{\p y^j})),$$ then $\{\hat{e}_i, f_j\}$ gives a basis of $T_{1,0}(E)$ for the almost CR structure $(HE,J_E)$.

For an almost CR structure $(HM, J)$, the endomorphism $J$ makes $HM$ to be a complex vector bundle. From Proposition \ref{5.1} we have
\begin{cor} \label{5.2} Let $(HM, J)$ be a codimension-$d$ almost CR structure on $M$. Then the space $HM$ admits codimension-$d$ almost CR structures such that the projection $p: HM\lra M$ is an almost CR map.
\end{cor}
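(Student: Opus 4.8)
The plan is to deduce this statement as a direct instance of Proposition \ref{5.1}. The key observation, already noted in the sentence preceding the corollary, is that the endomorphism $J$ of $HM$ satisfies $J^2 = -\mathrm{id}$ and therefore equips the real vector bundle $HM$ with the structure of a complex vector bundle over $M$, of complex rank $m_0$; in this picture $J$ itself plays the role of the fibrewise complex structure $I$ appearing in the hypotheses of Proposition \ref{5.1}. Thus I would set $E = HM$ and $k = m_0$ and apply that proposition verbatim.

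Concretely, I would first invoke the existence of a linear connection $\nabla^E$ on $E = HM$, which is standard (the existence of linear connections on any vector bundle is recalled in the proof of Proposition \ref{5.1}, with reference to \cite{KN}). Fixing such a connection, Proposition \ref{5.1} then produces a horizontal distribution, a Levi distribution $HE$, and an endomorphism $J_E$ with $J_E^2=-\mathrm{id}$, so that $(HE,J_E)$ is an almost CR structure on the total space $HM$ and the projection $p\colon HM \lra M$ is an almost CR map. Nothing in the construction requires $E$ to be anything other than an abstract complex vector bundle, so the passage from Proposition \ref{5.1} to the present setting is immediate.

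The only bookkeeping point to verify is that the codimension of the resulting structure equals $d$. Since $(HM,J)$ is a codimension-$d$ almost CR structure on the $m$-dimensional manifold $M$, its rank satisfies $2m_0 = m - d$, and Proposition \ref{5.1} yields an almost CR structure of codimension $m - 2m_0 = d$ on the total space. This matches the asserted codimension. There is no analytic obstacle here: the entire content is the identification of $J$ as the fibrewise complex structure together with this elementary dimension count, both of which are routine, so the corollary follows.
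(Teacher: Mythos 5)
Your proposal is correct and is exactly the paper's argument: the corollary is stated as an immediate consequence of Proposition \ref{5.1}, obtained by viewing $(HM,J)$ as a complex vector bundle of rank $m_0$ over $M$ and applying the proposition with $E=HM$. The codimension bookkeeping $d=m-2m_0$ you include is the only detail the paper leaves implicit, and it is handled correctly.
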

Denote the complex projectivization of $HM$ to be $\mathbb PHM=(HM-\{0\})/\sim$, where $\{0\}$ denotes the zero section and $\sim$ is the equivalent relation that $v_1\sim v_2$ if and only if $v_1=(a+bJ)v_2, a,b\in \mathbb R$. With the canonical action of $GL(n,\mathbb C)$ on $\mathbb CP^{n-1}$, $\mathbb PHM$ is a $\mathbb CP^{n-1}$ fiber bundle associated to the principal $GL(n,\mathbb C)$-bundle determined by $HM$. Then a connection one-form on the principal $GL(n,\mathbb C)$-bundle determines a horizontal distribution which passes to be a horizontal distribution on $\mathbb PHM$. On the other side, as a complex quotient bundle of a complex vector bundle, the vertical distribution of $\mathbb PHM$ holds a natural endomorphism $J_V$ with $J_V^2=-id$. Pulling back $HM$ and $J$ to the horizontal distribution on $\mathbb PHM$ and together with $J_V$, we obtain an almost CR structure. Altogether we have
\begin{cor} Let $(HM, J)$ be a codimension-$d$ almost CR structure on $M$. Then the complex projectivization $\mathbb PHM$ admits codimension-$d$ almost CR structures such that the projection $Pr: \mathbb PHM\lra M$ is an almost CR map.
\end{cor}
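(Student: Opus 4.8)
The plan is to build the almost CR structure on $\mathbb PHM$ by mimicking the vector-bundle construction of Proposition \ref{5.1}, replacing the vertical fiber $E$ (a complex vector space) by its projectivization $\mathbb{CP}^{n-1}$, where $n=m_0$ is the complex rank of $HM$. The structure of $\mathbb PHM$ as a $\mathbb{CP}^{n-1}$ fiber bundle associated to the principal $GL(n,\mathbb C)$-bundle of $HM$ is exactly what lets us transport the connection data. First I would recall that a linear connection on $HM$ (viewed as a complex bundle via $J$), equivalently a connection one-form on the principal $GL(n,\mathbb C)$-bundle $P=P(HM)$, determines a $GL(n,\mathbb C)$-equivariant horizontal distribution $\widetilde HP$ in $TP$. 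Since $\mathbb PHM$ is the associated fiber bundle $P\times_{GL(n,\mathbb C)}\mathbb{CP}^{n-1}$, this horizontal distribution descends to a horizontal distribution on $\mathbb PHM$, exactly as $\widetilde HP\times 0$ passed to $\widetilde HE=dq(\widetilde HP\times 0)$ in the proof of Proposition \ref{5.1}.

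The key steps, in order, are as follows. I would denote by $Pr:\mathbb PHM\lra M$ the projection and construct the horizontal distribution $\widetilde H$ on $\mathbb PHM$ so that $dPr|_{\widetilde H_v}:\widetilde H_v\lra H_{Pr(v)}M$ is an isomorphism fiberwise; pulling back $(HM,J)$ through this isomorphism yields a $2m_0$-dimensional distribution with an endomorphism squaring to $-\mathrm{id}$ on the horizontal part. Next, since the fibers $\mathbb{CP}^{n-1}$ carry their standard integrable complex structure, the vertical distribution $V(\mathbb PHM)$---being the tangent bundle to the fibers of a complex projective bundle---carries a natural almost complex structure $J_V$ with $J_V^2=-\mathrm{id}$; this is the analogue of $J_I$ on $VE$. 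I would then set $H(\mathbb PHM)=dPr_{\widetilde H}^{-1}(HM)\oplus V(\mathbb PHM)$ and define $J_{\mathbb PHM}$ to equal $dPr_{\widetilde H}^{-1}\circ J\circ dPr_{\widetilde H}$ on the horizontal part and $J_V$ on the vertical part. Since both pieces square to $-\mathrm{id}$ and the direct sum $TE=\widetilde H\oplus V$ ensures $V\cap \widetilde H=0$ (by the same argument that $dPr$ kills $V$ while restricting to an isomorphism on $\widetilde H$), the resulting pair is a genuine almost CR structure of codimension $d$. Finally, $dPr(H(\mathbb PHM))=HM$ and $dPr\circ J_{\mathbb PHM}=J\circ dPr$ by construction, so $Pr$ is an almost CR map.

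The main obstacle, relative to Proposition \ref{5.1}, is that the fiber is now a nonlinear space $\mathbb{CP}^{n-1}$ rather than the vector space $\mathbb R^{2k}$, so one cannot write the quotient map $q:P\times\mathbb R^{2k}\lra E$ and its trivial-second-factor projection $\widetilde\pi$ quite so naively. I expect the delicate point to be verifying that the horizontal distribution $\widetilde HP$ genuinely descends to a well-defined distribution on the associated bundle $\mathbb PHM=P\times_{GL(n,\mathbb C)}\mathbb{CP}^{n-1}$---this requires the $GL(n,\mathbb C)$-equivariance of $\widetilde HP$ together with the fact that the group acts holomorphically on $\mathbb{CP}^{n-1}$, so that the descended horizontal and vertical splitting is compatible with the complex structure $J_V$ on the fibers. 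Concretely, one must check that the transition functions of $\mathbb PHM$, being valued in $GL(n,\mathbb C)$ acting by biholomorphisms of $\mathbb{CP}^{n-1}$, preserve $J_V$ and hence that $J_V$ is globally defined; this holomorphy of the structure group action is the essential ingredient that makes the vertical complex structure canonical. Once this descent and the global well-definedness of $J_V$ are secured, the remainder follows verbatim from the bookkeeping in the proof of Proposition \ref{5.1}, and the local-frame description given after that proof (with $f_j$ replaced by a frame of the holomorphic tangent space of $\mathbb{CP}^{n-1}$) supplies an explicit check.
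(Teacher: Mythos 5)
Your proposal is correct and follows essentially the same route as the paper: realizing $\mathbb PHM$ as the $\mathbb{CP}^{n-1}$-bundle associated to the principal $GL(n,\mathbb C)$-bundle of $HM$, descending a connection-induced horizontal distribution, equipping the vertical distribution with its natural complex structure $J_V$, and pulling back $(HM,J)$ horizontally exactly as in Proposition \ref{5.1}. The equivariance and holomorphy points you flag as delicate are precisely the details the paper leaves implicit in its brief sketch, so your elaboration is a faithful filling-in rather than a departure.
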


Now assume that the almost CR structure $(HM, J)$ is integrable. With Corollary \ref{5.2}, it is natural to ask the following.
\begin{que}
Does there exist a CR structure on $HM$ such that the projection is a CR map?
\end{que}
At this point, we are not able to fully answer the above question. It may be related to the existence of some special linear connections. When we consider the tangent bundle $TM$ of a CR manifold $M$, we are able to construct a codimension-$2d$ CR structure on $TM$. Our method is based on techniques in Yano-Ishihara \cite{YI}.

\begin{thm}
Let $(M,HM,J)$ be a codimension-$d$ CR manifold. Then there exists a codimension-$2d$ CR structure on $TM$ such that the projection is a CR map.
\end{thm}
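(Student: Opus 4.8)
The plan is to build the CR structure on $TM$ from the classical \emph{complete} and \emph{vertical lifts} of Yano--Ishihara, which have the decisive advantage of being canonical, requiring no connection. In local coordinates $(x^1,\dots,x^m)$ on $M$ with induced coordinates $(x^i,y^i)$ on $TM$, a vector field $X=X^i\partial_{x^i}$ has vertical lift $X^v=X^i\partial_{y^i}$ and complete lift $X^c=X^i\partial_{x^i}+y^j(\partial_{x^j}X^i)\partial_{y^i}$. I would first record the three facts I need: the projection identities $dp(X^c)=X$, $dp(X^v)=0$; the bracket identities $[X^c,Y^c]=[X,Y]^c$, $[X^c,Y^v]=[X,Y]^v$, $[X^v,Y^v]=0$; and the Leibniz-type rules $(fX)^v=f^v X^v$ and $(fX)^c=f^v X^c+f^c X^v$, where $f^v=f\circ p$ and $f^c$ is the fibre-linear function $v\mapsto df(v)$. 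All of these extend $\mathbb C$-bilinearly to complexified fields.

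Next I would define the candidate structure directly on the level of $(1,0)$-vectors. Given a local frame $\{e_1,\dots,e_{m_0}\}$ of $T_{1,0}M$, set
$$T_{1,0}(TM):=\mathrm{span}_{\mathbb C}\{e_a^c,\,e_a^v:1\le a\le m_0\}\subset T(TM)\otimes\mathbb C.$$
Using the two Leibniz rules I would check that under a change of frame $e_a\mapsto g_a^b e_b$ (with $g\in GL(m_0,\mathbb C)$) the span is preserved, so that $T_{1,0}(TM)$ is a globally well-defined complex subbundle of complex rank $2m_0$; this is a step where one must be careful, since a CR manifold carries no global $(1,0)$-frame. To see that it defines an almost CR structure I would extend $J$ to an $f$-structure $\hat J$ on all of $TM$ (setting $\hat J=0$ on any complement of $HM$, so that $\hat J^3+\hat J=0$) and note that its complete lift satisfies $(\hat J^c)^3+\hat J^c=0$ together with $\hat J^c e_a^c=(\hat J e_a)^c=i\,e_a^c$ and $\hat J^c e_a^v=(\hat J e_a)^v=i\,e_a^v$. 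Thus $T_{1,0}(TM)$ is precisely the $(+i)$-eigenbundle of $\hat J^c$, which forces $T_{1,0}(TM)\cap T_{0,1}(TM)=0$; its real rank $4m_0=2(m-d)$ gives the asserted codimension $2d$, and the construction is visibly independent of the choice of complement.

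The heart of the argument is integrability, and here the bracket identities do all the work. Writing the CR integrability of $M$ as $[e_a,e_b]=c_{ab}^d e_d$ with $c_{ab}^d$ functions, I would compute
$$[e_a^c,e_b^c]=[e_a,e_b]^c=(c_{ab}^d)^v e_d^c+(c_{ab}^d)^c e_d^v,\quad [e_a^c,e_b^v]=(c_{ab}^d)^v e_d^v,\quad [e_a^v,e_b^v]=0,$$
so every bracket of frame sections again lies in $T_{1,0}(TM)$; since involutivity of a distribution may be tested on a local frame, $T_{1,0}(TM)$ is involutive and hence defines a genuine CR structure. Finally, from $dp(e_a^c)=e_a$ and $dp(e_a^v)=0$ I obtain $dp(T_{1,0}(TM))\subseteq T_{1,0}M$, which is exactly the condition (\ref{almost CR}) for $p$ to be an almost CR map, here a CR map since both structures are integrable. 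The main obstacle I anticipate is essentially bookkeeping: confirming global well-definedness in the absence of a global frame and tracking the vertical and complete lifts of the complex structure functions $c_{ab}^d$. Conceptually, however, everything reduces to the single observation that the complete lift turns the involutivity $[e_a,e_b]\in\Gamma(T_{1,0}M)$ on $M$ into involutivity on $TM$, and that this lift is connection-free, which is precisely what was missing in the general setting of Question \ref{prop1.4}'s discussion.
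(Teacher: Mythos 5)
Your proposal is correct and follows essentially the same route as the paper: both constructions take the Yano--Ishihara vertical and complete lifts of the Levi distribution, use the complete lift of $J$ (extended by zero on a complement) to define the almost CR structure, and derive integrability from the bracket identities for lifts. The only cosmetic difference is that you verify involutivity of the complexified $(1,0)$-bundle on a frame, while the paper checks the two real integrability conditions via the Nijenhuis tensor $N_{J^C}$; these are equivalent formulations.
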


\begin{proof}
The construction relies on the vertical lift and complete lift of vector fields from $M$ to $TM$ introduced in \cite{YI} which we briefly discuss below. Assume that $U$ is a local chart of $M$ with coordinates $(x^1, \cdots, x^m)$ so that $\{\f{\p}{\p x^1},\cdots,\f{\p}{\p x^m}\}$ consists of a local frame of $TM$. Denote $(y^1,\cdots, y^m)$ to be the vector coordinates with respect to the frame. Then $(x^1, \cdots, x^m, y^1,\cdots,y^m)$ gives a local coordinates of $TU$. Let $\{\f{\p}{\p X^1}, \cdots, \f{\p}{\p X^m}, \f{\p}{\p Y^1}, \cdots, \f{\p}{\p Y^m}\}$ be the corresponding local frame of the tangent bundle $T(TM)$. Assume that $Z$ is a vector field of $M$ locally given by $Z=z^i\f{\p}{\p x^i}$. The vertical lift of $Z$ is a vector field $Z^V$ on $TM$ locally defined by $Z^V=z^i\f{\p}{\p Y^i}.$ The complete lift of $Z$ is a vector field on $TM$ defined by $Z^C=z^i\f{\p}{\p X^i}+y^j\f{\p z^i}{\p x^j}\f{\p}{\p Y^i}.$ For a $(1,1)$ tensor $A=A_i^j\f{\p}{\p x^j}\otimes dx^i$ on $M$, the complete lift of $A$ is defined to be a $(1,1)$ tensor on $TM$ by $$A^C=A_i^j\f{\p}{\p X^j}\otimes dX^i+y^k\f{\p A_i^j}{\p x^k}\f{\p}{\p Y^j}\otimes dX^i+A_i^j\f{\p}{\p Y^j}\otimes dY^i.$$ It is straightforward to verify that the above constructions do not depend on local coordinates and are globally defined. Let $Z,W$ be two vector fields on $M$ and $A$ be a $(1,1)$ tensor. From the local definitions, the following identities hold (see Proposition 3.3, 3.9 in \cite{YI})
\begin{align}
A^CZ^C=(AZ)^C, A^CZ^V=(AZ)^V, (A^2)^C=(A^C)^2,\notag\\
 [Z^V,W^V]=0, [Z^V,W^C]=[Z,W]^V, [Z^C,W^C]=[Z,W]^C. \label{com}
 \end{align}
Assume that $(HM, J)$ is a codimension-$d$ CR structure on $M$. The vertical lift and complete lift of $HM$ to $TM$ give two distributions on $TM$ which are denoted by $(HM)^V$ and $(HM)^C$. From the definition, we have $(HM)^V\cap (HM)^C=0$. Choose a splitting of $TM=HM\oplus F$ where $F$ is the complement distribution of $HM$ and extend $J$ to $TM$ by requiring $J|_F=0$. Consider the complete lift of $J$ which is $J^C$. From the above identities we get that the restriction of $J^C$ to $(HM)^V\oplus (HM)^C$ does not depend on the choice of $F$. Also $(J^C)^2=-id$ on $(HM)^V\oplus (HM)^C$. So $((HM)^V\oplus (HM)^C, J^C)$ forms an codimension-2d almost CR structure on $TM$.

To prove that $J^C$ is integrable, first notice that $[J^CZ^C,W^C]+[Z^C,J^CW^C]=([JZ,W]+[Z,JW])^C$ which lies in $(HM)^V\oplus (HM)^C$. The same holds for $Z^V, W^V$, etc. So the first integrability condition of a CR structure holds. Denote the Nijenhuis tensor of $J$ by $N_J(Z,W)=[JZ,JW]-J[JZ,W]-J[Z,JW]-[Z,W]$. From the identities (\ref{com}), we have $N_{J^C}(Z^V,W^V)=0$, $N_{J^C}(Z^C,W^C)=(N_J(Z,W))^C$ and $N_{J^C}(Z^C,W^V)=(N_J(Z,W))^V$. As $J$ is integrable, $N_J=0$. So we have $N_{J^C}=0$. Therefore, $((HM)^V\oplus (HM)^C, J^C)$ is integrable.
\end{proof}

\begin{remk}
When $d=0$, the above CR structure is just the canonical complex structure on the tangent bundle of a complex manifold (\cite{Kru}).
\end{remk}

\end{document}